\newtheorem{theorem}{Theorem}[section]
\newtheorem{lemma}[theorem]{Lemma}
\newtheorem{corollary}[theorem]{Corollary}
\newtheorem{proposition}[theorem]{Proposition}
\theoremstyle{remark}
\newtheorem{remark}[theorem]{Remark}
\newtheorem{comprem}[theorem]{Computational Remark}
\theoremstyle{definition}
\newtheorem{definition}[theorem]{Definition}
\newtheorem{step}{Step}
\newtheorem*{Input}{Input}
\newtheorem*{Output}{Output}
\newcommand{\OH}{\mathcal O}
\newcommand{\Aut}{\mathrm{Aut} }
\title{On threefolds isogenous to a product of curves}
\author[D. FRAPPORTI, C. GLEI\ss NER]{DAVIDE FRAPPORTI, CHRISTIAN GLEI\ss NER}
\keywords{Threefolds  of general type, finite group actions} 
\subjclass[2000]{14J30, 14L30, 14Q99} 
\address{Davide Frapporti, Christian Glei\ss ner;
 University of Bayreuth, Lehrstuhl Mathematik VIII;
Universit\"atsstra\ss e 30, D-95447 Bayreuth, Germany}
\thanks{ The authors thank I. Bauer, S. Weigl and H. Mohseni Ahouei for several  
suggestions, useful discussions and careful reading of the paper. 	
The present work took mainly place in the realm of the DFG
Forschergruppe 790 ``Classification of algebraic
surfaces and compact complex manifolds''.
 The first author is a member of G.N.S.A.G.A. of I.N.d.A.M.	}
\begin{document}

\begin{abstract} 
A \textit{threefold isogenous to a product of curves} $X$ is a quotient of a product of three compact Riemann 
surfaces of genus at least two by the free action of a finite group.
In this paper we study these threefolds under the assumption that the group acts 
diagonally on the product. We show that the classification of these threefolds is a finite problem,
 present an algorithm to classify them for a fixed value of 
$\chi(\mathcal O_X)$ and explain a  method to determine their Hodge numbers.
Running an implementation of the algorithm 
we achieve the full classification of threefolds isogenous to a product of curves with $\chi(\mathcal O_X)=-1$, 
under the assumption that  the group acts faithfully on each factor.
\end{abstract}

\maketitle

\section*{Introduction}

A complex algebraic variety  $X$ is  \textit{isogenous to a product of curves} if $X$ is a quotient 
\[
X=(C_1 \times \ldots\times C_n)/G,
\]
where the $C_i$'s  are compact Riemann surfaces of genus
at least two and $G$ is a finite group acting freely on $C_1 \times \ldots \times  C_n$. 
This class of  varieties of general type has been
introduced by Catanese in \cite{Cat00}. Since then, a considerable amount of literature appeared, 
especially in the case of surfaces. 
In particular surfaces $S$ isogenous to a product
of curves with $\chi(\OH_S)=1$ (equivalently $p_g(S)=q(S)$) are completely classified,
see \cite{BCG08, CP09,penegini,CCML98, Pir02, HP02, Be82}.

A natural question is:
 ``\,Is it possible to classify varieties $X$ isogenous to a product  for a fixed value of $\chi(\OH_X)$ in higher dimensions?\,"

Let $G^0$ be the diagonal subgroup
\[
G^0:= G \cap \big(\Aut(C_1)\times \ldots \times \Aut(C_n)\big).
\]
There are two possibilities for the action of $G$ on the product $C_1 \times \ldots \times C_n$:
\begin{itemize}
\item  \textit{Unmixed}: $G = G^0$, i.e. the group $G$ acts diagonally. 
\item \textit{Mixed}:  $G \neq G^0$, i.e. there are elements in $G$ permuting some factors of the product.
\end{itemize}
 
In the present article we concentrate on the  unmixed case in dimension three, while in a forthcoming one we plan to 
investigate the mixed case.

Following the approach used by the above mentioned authors,  
we show that the  classification problem can be translated into a problem of group theory.

Let $X:= \big(C_1 \times C_2 \times C_3\big)/G$ be a threefold isogenous to a product 
of unmixed type.
Restricting  the $G$-action  we obtain homomorphisms
\[
\psi_i: G \to \Aut(C_i) \quad \makebox{with kernels} \quad K_i:=\ker(\psi_i).
\]
Note that in contrast to the surface case, it is not possible to assume that the kernels $K_i$ are trivial.

To the threefold $X$ we attach the \textit{algebraic datum}
\[
(G,K_1,K_2,K_3,V_1,V_2,V_3)
\]
where  $V_i$ is a \textit{generating vector} for the group $G/K_i$ (see Definition \ref{DefGV}).

Conversely, by Riemann's existence Theorem, 
a  tuple consisting of a finite group $G$, three normal subgroups $K_i$ and generating vectors $V_i$ for $G/K_i$
 satisfying certain relations, determines a family of threefold isogenous to a product.

What turns the classification into a finite problem is the fact that 
the freeness assumption  for  the group action allows us to 
bound the group order in terms of $\chi(\mathcal O_X)$  
and derive  strong constraints on the orders of the kernels $K_i$, the genera 
$g(C_i)$ and the types $T_i$ of the generating vectors $V_i$. 

This observation is used to develop an algorithm searching systematically through all admissible algebraic data  
and thereby classifying all threefolds
isogenous to a product of unmixed type with a fixed value of $\chi(\mathcal O_X)$. 
Unfortunately the bound of the group order that we have is not good enough to perform 
the complete classification even for small values of $\chi(\mathcal O_X)$ and  using a computer algebra system. 
However, if we restrict to the \textit{absolutely faithful case}, that is all kernels $K_i$ are trivial, 
 then the bound drops significantly making a full classification possible. 
 
For this purpose we implemented our algorithm using the computer algebra system MAGMA \cite{magma}. 
A commented version can be downloaded from
\[
\mbox{\url{http://www.staff.uni-bayreuth.de/~bt300503/}}.
\]

Here we point out a difference in our method and the method of the authors which classified 
surfaces isogenous to a product with $\chi(\mathcal O_S)=1$. 
They considered separately each possible value 
of $p_g(S)=q(S)$, instead we fix only $\chi(\mathcal O_X)$.

This leads us naturally to the question which Hodge numbers are realized by our construction.
In the case of surfaces isogenous to a product, 
once $p_g(S)$ and $q(S)$ are fixed, the remaining Hodge number $h^{1,1}(S)$ is 
uniquely determined. 
In dimension three (and higher), it is not clear how to determine the Hodge diamond, 
even if we would fix the invariants
\[
p_g(X):= h^3(\OH_X), \quad q_2(X):= h^2(\OH_X) \quad \makebox{and} \quad q_1(X):= h^1(\OH_X).  
\]
Our strategy relies on representation theory. The group actions $\psi_i$ induce representations
via pull-back of holomorphic $1$-forms 
\[
\varphi_i: G \to GL\left( H^{1,0}(C_i) \right).
\]
According to the formula of \textit{Chevalley-Weil}, the characters $\chi_{\varphi_i}$ of these representations 
can be computed from a generating vector $V_i$ of $G/K_i$. 
This provides a way to compute the Hodge numbers of $X$:
we   determine the characters $\chi_{p,q}$ of the representations 
\[
 \phi_{p,q} : G \to \mathrm{GL}\big(H^{p,q} (C_1 \times C_2 \times C_3)\big)  
\quad g \mapsto  [\,\omega \mapsto (g^{-1})^{\ast}\omega ]
\]
in terms of the characters $\chi_{\varphi_i}$, then $h^{p,q}(X)$ is the multiplicity of the trivial character of $G$ in $\chi_{p,q}$.

As an application, we run our program in the case $\chi(\OH_X)=-1$. Note  that  this is the boundary value: for a threefold $X$ of general 
type with ample canonical class   it holds 
\[0<c_1(K_X)c_2(X)=-24\chi(\mathcal O_X) \,,\]
according to \cite[Theorem 1.1]{MiyaokaChern} and Riemann-Roch.

 The main result of the paper is the following theorem.

 \begin{theorem}\label{thm51}
Let $X:=(C_1 \times C_2 \times C_3)/G$ be a threefold isogenous to a product of curves of unmixed type. 
Assume that the action of $G$ is absolutely faithful and $\chi(\mathcal O_X)=-1$. 

Then, the tuple
\[
[G,T_1,T_2,T_3, p_g(X), q_2(X), q_1(X), h^{1,1}(X), h^{1,2}(X), H_1(X,\mathbb Z)]
\]
appears in Table \ref{tab1}.

 Conversely, each row is realized by at least one family of threefold 
isogenous to a product of curves of unmixed type with $\chi(\mathcal O_X)=-1$
and absolutely faithful $G$-action. 

All these threefolds have invariants $K_X^3= 48$ and $e(X)=-8$.
\end{theorem}

\begin{longtable}{p{1.6cm}p{1.5cm}p{1.5cm}p{1.5cm}p{1.5cm}p{0.5cm}p{0.5cm}p{0.5cm}p{0.5cm}p{1.0cm}p{2.5cm}}\label{tab1}
 $G$ & $Id$ &$T_1$ & $T_2$ & $T_3$  & $p_g$ & $q_2$ & $q_1$ &$h^{1,1}$ & $h^{1,2}$ & $H_1(X,\mathbb Z)$ \\ \hline \endfirsthead
$G$ & $Id$ &$T_1$ & $T_2$ & $T_3$  & $p_g$ & $q_2$ & $q_1$ &$h^{1,1}$ & $h^{1,2}$ & $H_1(X,\mathbb Z)$\\ \hline \endhead

$\mathfrak A_5$ & $\langle 60,5 \rangle$& $[0; 2, 5^2 ]$ & $[0; 3^2, 5 ]$ & $[0; 2^3, 3 ]$ & $2$ & $0$ & $0$ &$3$ & $6$& $\mathbb Z_2\times \mathbb Z_{30}$\\
$\mathfrak S_4 \times \mathbb Z_2 $ & $\langle 48,48 \rangle$& $[0; 2, 4, 6 ]$ & $[0; 2, 4, 6 ]$ & $[0; 2^5 ]$ & $3$ & $1$ & $0$ &$5$ & $9$ &$\mathbb Z_2^3\times \mathbb Z_4$\\
$ \mathrm{GL}(2,\mathbb F_3)$ & $\langle 48,29 \rangle$ & $[0; 2, 3, 8 ]$ & $[0; 2, 3, 8 ]$ & $[2;-]$ & $5$ & $5$ & $2$ &$11$ & $17$&$\mathbb Z^4$\\
$ \mathrm{GL}(2,\mathbb F_3)$ & $\langle 48,29 \rangle$ & $[0; 2, 3, 8 ]$ & $[0; 2, 3, 8 ]$ & $[2;-]$ & $4$ & $4$ & $2$ &$13$ & $18$& $\mathbb Z^4$\\
$\mathbb Z_3 \rtimes_\varphi \mathcal D_4$ & $\langle 24,8 \rangle$ & $[0; 2, 4, 6 ]$ & $[0; 2, 4, 6 ]$ & $[2;-]$ & $5$ & $5$ & $2$ &$11$ &$17$ & $\mathbb Z^4$\\
$\mathbb Z_3 \rtimes_\varphi \mathcal D_4$  & $\langle 24,8 \rangle$ & $[0; 2, 4, 6 ]$ & $[0; 2, 4, 6 ]$ & $[2;-]$ & $4$ & $4$ & $2$ &$13$ & $18$& $\mathbb Z^4$\\
$ \mathrm{SL}(2,\mathbb F_3)$& $\langle 24,3 \rangle$ & $[0; 3, 3, 4 ]$ & $[0; 3, 3, 4 ]$ & $[2;-]$ & $5$ & $5$ & $2$ &$13$ & $19$& $\mathbb Z^4$\\
$\mathfrak S_4$ & $\langle 24,12 \rangle$ & $[0; 3, 4^2 ]$ & $[0; 2^3, 4 ]$  & $[0; 2^2,3^2]$ & $3$ & $1$ & $0$ &$5$ & $9$&$\mathbb Z_2^2\times \mathbb Z_{24}$\\
$\mathbb Z_8 \rtimes_ \varphi \mathbb Z_2 $ & $\langle 16,8 \rangle$ & $[0; 2, 4, 8 ]$ & $[0; 2, 4, 8 ]$ & $[2; -]$ & $5$ & $5$ & $2$ & $11$ & $17$ &$\mathbb Z_2^2\times \mathbb Z^4$\\
$\mathbb Z_8 \rtimes_ \varphi \mathbb Z_2 $ & $\langle 16,8 \rangle$ & $[0; 2, 4, 8 ]$ & $[0; 2, 4, 8 ]$ & $[2; -]$ & $4$ & $4$ & $2$ & $13$ & $18$&$\mathbb Z_2^2\times \mathbb Z^4$\\
$\mathcal D_4 \times \mathbb Z_2 $ & $\langle 16,11 \rangle$ & $[0; 2^3, 4 ]$ & $[0; 2^3, 4 ]$ & $[0; 2^5]$ & $4$ & $2$ & $0$ & $7$ & $12$& $\mathbb Z_2^3\times \mathbb Z_4^2$\\
$\mathcal D_4 \times \mathbb Z_2 $ & $\langle 16,11 \rangle$ & $[0; 2^3, 4 ]$ & $[0; 2^3, 4 ]$ & $[0; 2^5]$ & $3$ & $1$ & $0$ & $5$ & $9$& $\mathbb Z_2^3\times \mathbb Z_4^2$\\
$\mathcal D_6$ & $\langle 12,4 \rangle$ & $[0; 2^3, 3 ]$ & $[0; 2^3, 6 ]$ & $[1; 2^2]$ & $4$ & $3$ & $1$ & $9$ & $14$& $\mathbb Z_2\times \mathbb Z_4\times \mathbb Z^2$\\
$\mathcal D_6$  & $\langle 12,4 \rangle$ & $[0; 2^3, 3 ]$ & $[0; 2^3, 3 ]$ & $[2;-]$ & $5$ & $5$ & $2$ & $13$ & $19$& $\mathbb Z^4$\\
$\mathcal D_6$ & $\langle 12,4 \rangle$& $[0; 2^3, 3 ]$ & $[0; 2^5 ]$ & $[1;3]$ & $4$ & $3$ & $1$ & $9$ & $14$&$\mathbb Z_2^2\times \mathbb Z^2$\\
$\mathbb Z_3 \times \mathbb Z_2^2$ & $\langle 12,5 \rangle$& $[0; 2, 6^2 ]$ & $[0; 2, 6^2 ]$ & $[2;-]$ & $6$ & $6$ & $2$ & $11$ & $18$& $\mathbb Z^4$\\
$\mathbb Z_3 \times \mathbb Z_2^2$ & $\langle 12,5 \rangle$& $[0; 2, 6^2 ]$ & $[0; 2, 6^2 ]$ & $[2;-]$ & $5$ & $5$ & $2$ & $11$ & $17$& $\mathbb Z^4$\\
$\mathbb Z_3 \times \mathbb Z_2^2$ & $\langle 12,5 \rangle$& $[0; 2, 6^2 ]$ & $[0; 2, 6^2 ]$ & $[2;-]$ & $4$ & $4$ & $2$ & $13$ & $18$& $\mathbb Z^4$\\
$\mathbb Z_3 \times \mathbb Z_2^2$ & $\langle 12,5 \rangle$& $[0; 2, 6^2 ]$ & $[0; 2, 6^2 ]$ & $[2;-]$ & $4$ & $4$ & $2$ & $15$ & $20$& $\mathbb Z^4$\\
$\mathbb Z_3 \rtimes_ \varphi \mathbb Z_4$ & $\langle 12,1 \rangle$& $[0; 3, 4^2 ]$ & $[0; 3, 4^2 ]$ & $[2;-]$ & $5$ & $5$ & $2$ & $13$ & $19$& $\mathbb Z^4$\\
$\mathbb Z_{10}$ & $\langle 10,2 \rangle$& $[0; 2, 5, 10 ]$ & $[0; 2, 5, 10 ]$ & $[2;-]$ & $5$ & $5$ & $2$ & $13$ & $19$& 
$\mathbb Z^4$\\
$\mathbb Z_{10}$ & $\langle 10,2 \rangle$& $[0; 2, 5, 10 ]$ & $[0; 2, 5, 10 ]$ & $[2;-]$ & $6$ & $6$ & $2$ & $11$ & $18$& 
$\mathbb Z^4$\\
$\mathbb Z_{10}$ & $\langle 10,2 \rangle$& $[0; 2, 5, 10 ]$ & $[0; 2, 5, 10 ]$ & $[2;-]$ & $4$ & $4$ & $2$ & $15$ & $20$& $\mathbb Z^4$\\
$\mathcal D_4$ & $\langle 8,3 \rangle$& $[0; 2^3, 4 ]$ & $[1; 2 ]$ & $[0;2^6]$ & $4$ & $3$ & $1$ & $9$ & $14$& $\mathbb Z_2^4\times \mathbb Z^2$\\
$\mathcal D_4$ & $\langle 8,3 \rangle$& $[0; 2^3, 4 ]$ & $[1; 2 ]$ & $[1;2^2]$ & $4$ & $4$ & $2$ & $11$ & $16$&  $\mathbb Z_2\times \mathbb Z^4$\\
$\mathcal D_4$ & $\langle 8,3 \rangle$& $[0; 2^3, 4 ]$ & $[0; 2^2, 4^2 ]$ & $[1;2^2]$ & $4$ & $3$ & $1$ & $9$ & $14$&  $\mathbb Z_2^2\times\mathbb Z_8 \times\mathbb Z^2$\\
$\mathcal D_4$ & $\langle 8,3 \rangle$& $[0; 2^3, 4 ]$ & $[0; 2^3, 4 ]$ & $[2;-]$ & $5$ & $5$ & $2$ & $13$ & $19$& $\mathbb Z_2^2\times\mathbb Z^4$\\
$\mathbb Z_8$ & $\langle 8,1 \rangle$& $[0; 2, 8^2 ]$ & $[0; 2, 8^2 ]$ & $[2;-]$ & $6$ & $6$ & $2$ & $11$ & $18$& $\mathbb Z_2^2\times \mathbb Z^4$\\
$\mathbb Z_8$ & $\langle 8,1 \rangle$& $[0; 2, 8^2 ]$ & $[0; 2, 8^2 ]$ & $[2;-]$ & $4$ & $4$ & $2$ & $15$ & $20$& $\mathbb Z_2^2\times \mathbb Z^4$\\
$Q$ & $\langle 8,4 \rangle$& $[0; 4^3 ]$ & $[0; 4^3 ]$ & $[2;-]$ & $5$ & $5$ & $2$ & $13$ & $19$&  $\mathbb Z_2^4\times \mathbb Z^4$\\
$\mathbb Z_2^3$ & $\langle 8,5 \rangle$& $[0; 2^5 ]$ & $[0; 2^5 ]$ & $[0;2^5]$ & $5$ & $3$ & $0$ & $9$ & $15$& $\mathbb Z_2^3\times \mathbb Z_4^3$\\
$\mathbb Z_2^3$ & $\langle 8,5 \rangle$& $[0; 2^5 ]$ & $[0; 2^5 ]$ & $[0;2^5]$ & $4$ & $2$ & $0$ & $7$ & $12$&$\mathbb Z_2^3\times \mathbb Z_4^3$\\
$\mathbb Z_2^3$ & $\langle 8,5 \rangle$& $[0; 2^5 ]$ & $[0; 2^5 ]$ & $[0;2^5]$ & $4$ & $2$ & $0$ & $7$ & $12$&$\mathbb Z_2^5\times \mathbb Z_4^2$\\
$\mathfrak S_3$ & $\langle 6,1 \rangle$& $[0; 2^2, 3^2 ]$ & $[1; 3 ]$ & $[0;2^6]$ & $4$ & $3$ & $1$ & $9$ & $14$& $\mathbb Z_2^3\times \mathbb Z_6\times \mathbb Z^2$\\
$\mathfrak S_3$ & $\langle 6,1 \rangle$& $[0; 2^2, 3^2 ]$ & $[1; 3 ]$ & $[1;2^2]$ & $4$ & $4$ & $2$ & $11$ & $16$& $\mathbb Z_3\times \mathbb Z^4$\\
$\mathfrak S_3$ & $\langle 6,1 \rangle$& $[0; 2^2, 3^2 ]$ & $[0; 2^2, 3^2 ]$ & $[2;-]$ & $5$ & $5$ & $2$ & $13$ & $19$&$\mathbb Z_3^2\times \mathbb Z^4$\\
$\mathbb Z_6$ & $\langle 6,2 \rangle$& $[0; 2^2, 3^2 ]$ & $[0; 2^2, 3^2 ]$ & $[2;-]$ & $6$ & $6$ & $2$ & $15$ & $22$&$ \mathbb Z^4$\\
$\mathbb Z_6$ & $\langle 6,2 \rangle$& $[0; 2^2, 3^2 ]$ & $[0; 3, 6^2 ]$ & $[2;-]$ & $5$ & $5$ & $2$ & $13$ & $19$&$\mathbb Z_3\times \mathbb Z^4$\\
$\mathbb Z_6$ & $\langle 6,2 \rangle$& $[0; 3, 6^2 ]$ & $[0; 3, 6^2 ]$ & $[2;-]$ & $6$ & $6$ & $2$ & $11$ & $18$&$\mathbb Z_3^2\times \mathbb Z^4$\\
$\mathbb Z_6$ & $\langle 6,2 \rangle$& $[0; 3, 6^2 ]$ & $[0; 3, 6^2 ]$ & $[2;-]$ & $4$ & $4$ & $2$ & $15$ & $20$&$\mathbb Z_3^2\times \mathbb Z^4$\\
$\mathbb Z_5$ & $\langle 5,1 \rangle$& $[0; 5^3 ]$ & $[0; 5^3 ]$ & $[2;-]$ & $6$ & $6$ & $2$ & $11$ & $18$&$\mathbb Z_5^2\times \mathbb Z^4$\\
$\mathbb Z_5$ & $\langle 5,1 \rangle$& $[0; 5^3 ]$ & $[0; 5^3 ]$ & $[2;-]$ & $5$ & $5$ & $2$ & $13$ & $19$&$\mathbb Z_5^2\times \mathbb Z^4$\\
$\mathbb Z_5$ & $\langle 5,1 \rangle$& $[0; 5^3 ]$ & $[0; 5^3 ]$ & $[2;-]$ & $4$ & $4$ & $2$ & $15$ & $20$&$\mathbb Z_5^2\times \mathbb Z^4$\\
$\mathbb Z_2^2$ & $\langle 4,2 \rangle$& $[0; 2^5 ]$ & $[0; 2^6 ]$ & $[1;2^2]$ & $6$ & $5$ & $1$ & $13$ & $20$&$\mathbb Z_2^5\times \mathbb Z_4\times \mathbb Z^2$\\
$\mathbb Z_2^2$ & $\langle 4,2 \rangle$& $[0; 2^5 ]$ & $[0; 2^6 ]$ & $[1;2^2]$ & $5$ & $4$ & $1$ & $11$ & $17$&$\mathbb Z_2^5\times \mathbb Z_4\times \mathbb Z^2$\\
$\mathbb Z_2^2$ & $\langle 4,2 \rangle$& $[0; 2^5 ]$ & $[1; 2^2 ]$ & $[1;2^2]$ & $5$ & $5$ & $2$ & $13$ & $19$&$\mathbb Z_2^3\times \mathbb Z^4$\\
$\mathbb Z_2^2$ & $\langle 4,2 \rangle$& $[0; 2^5 ]$ & $[1; 2^2 ]$ & $[1;2^2]$ & $4$ & $4$ & $2$ & $11$ & $16$&$\mathbb Z_2^3\times \mathbb Z^4$\\
$\mathbb Z_2^2$ & $\langle 4,2 \rangle$& $[0; 2^5 ]$ & $[0; 2^5 ]$ & $[2;-]$ & $6$ & $6$ & $2$ & $15$ & $22$&$\mathbb Z_2^4\times \mathbb Z^4$ \\
$\mathbb Z_2^2$ & $\langle 4,2 \rangle$& $[0; 2^5 ]$ & $[0; 2^5 ]$ & $[2;-]$ & $5$ & $5$ & $2$ & $13$ & $19$&$\mathbb Z_2^4\times \mathbb Z^4$ \\
$\mathbb Z_4$ & $\langle 4,1 \rangle$& $[0; 2^2, 4^2 ]$ & $[0; 2^2, 4^2 ]$ & $[2;-]$ & $6$ & $6$ & $2$ & $15$ & $22$&$\mathbb Z_2^4\times \mathbb Z^4$\\
$\mathbb Z_3$ & $\langle 3,1 \rangle$& $[0; 3^4 ]$ & $[0; 3^4 ]$ & $[2;-]$ & $6$ & $6$ & $2$ & $15$ & $22$ &$\mathbb Z_3^4\times \mathbb Z^4$\\
$\mathbb Z_2$ & $\langle 2,1 \rangle$& $[1; 2^2 ]$ & $[1; 2^2 ]$ & $[2;-]$ & $6$ & $8$ & $4$ & $19$ & $26$& $ \mathbb Z^{8}$\\
$\mathbb Z_2$ & $\langle 2,1 \rangle$& $[0; 2^6 ]$ & $[1; 2^2 ]$ & $[2;-]$ & $6$ & $7$ & $3$ & $17$ & $24$& $ \mathbb Z_2^4\times\mathbb Z^6$\\
$\mathbb Z_2$ & $\langle 2,1 \rangle$& $[0; 2^6 ]$ & $[0; 2^6 ]$ & $[2;-]$ & $8$ & $8$ & $2$ & $19$ & $28$& $ \mathbb Z_2^8\times\mathbb Z^4$\\
$\{1\}$ & $\langle 1,1 \rangle$& $[2; - ]$ & $[2; - ]$ & $[2;-]$ & $8$ & $12$ & $6$ & $27$ & $36$& $ \mathbb Z^{12}$		
\\
\\
\caption[Threefolds isogenous]{Threefolds isogenous to a product with $\chi(\mathcal O_X)=-1$.} 
\end{longtable}

In Table \ref{tab1} the types $T_i$ of the generating vectors (see Definition \ref{DefGV}) are given in a simplified way:
\[
[g';a_1^{k_1},\ldots,a_r^{k_r}]:=
[g';\underbrace {a_1,\ldots,a_1}_{\makebox{$k_1$-times}},\ldots,\underbrace {a_r,\ldots,a_r}_{\makebox{$k_r$-times}}].
\]
The column $Id$ reports the MAGMA identifier of the group $G$: here $\langle a,b \rangle$ denotes the $b^{th}$ 
group of order $a$ in the database of Small Groups. The cyclic group or order $n$ is denoted by $\mathbb Z _n$, 
the symmetric group on $n$ letters by $\mathfrak S _n$, the alternating group on $n$ letters by  $\mathfrak A_n$, the 
quaternion group by  $Q$ and the dihedral group of order $2n$ by $\mathcal D_n$.
 
Throughout the paper all varieties are defined over the field of complex numbers
and the standard notation from complex algebraic geometry is used,
see for example \cite{G-H}.

The paper is organized in the following way.

In Section \ref{generalities} we introduce varieties isogenous to a product of curves
and explain some of their basic properties.

Section \ref{Riemannsurf} is about group actions on compact Riemann surfaces: we
recall \textit{Riemann's existence theorem} and state the \textit{Chevalley-Weil formula}
which describes the induced action on the space of holomorphic $1$-forms. 

  In Section \ref{group_descr}
we show that the geometry of a threefold isogenous  to a product of unmixed type is 
encoded in the group via an algebraic datum.
Based on that, we give a method to compute its \textit{Hodge diamond} and its \textit{fundamental group}.

 In Section \ref{bounds_smooth}  we show that the classification of threefolds isogenous to a 
product $X$ of unmixed type with a fixed value of $\chi(\mathcal{O}_X)$  is a finite problem and give an algorithm
to perform this classification.

\newpage
\section{Basics on Varieties Isogenous to a Product}\label{generalities}

\begin{definition}
A complex algebraic variety $X$ is \textit{isogenous to a  product of curves}
if there exist compact Riemann surfaces $C_1, \ldots , C_n$ of genus at least two and 
a finite group  $G \leq \Aut(C_1 \times \ldots \times C_n) $  acting freely on the product $C_1 \times \ldots \times C_n$ such that
\[
X=(C_1 \times \ldots \times C_n)/G.
\]
\end{definition}

It follows from the definition that a variety  $X$ which is isogenous to a product is smooth, projective, 
of general type (i.e.~$\kappa(X)=\dim(X)=n$) and  its  canonical class $K_X$ is ample.

The \textit{$n$-fold self-intersection} of the canonical class $K_X^n$, the \textit{topological Euler number} 
$e(X)$ and the holomorphic \textit{Euler-Poincar\'e-characteristic} $\chi(\mathcal O_X)$ 
can be expressed in terms of the genera $g(C_i)$ and the group order $|G|$.

\begin{proposition}\label{invarsmooth}
Let $X := (C_1 \times \ldots \times C_n)/G $ be a variety isogenous to a product.
Then 
\[
K_X^n= \frac{n! \, 2^n}{|G|} \prod_{i=1}^{n}\big( g(C_i) - 1 \big), \quad 
e(X)= \frac{(-1)^n \, 2^n}{|G|} \prod_{i=1}^{n}\big( g(C_i) - 1 \big)
\]
and
\[
\chi(\mathcal O_X)= \frac{(-1)^n}{|G|} \prod_{i=1}^n \big( g(C_i)-1 \big).
\]
\end{proposition}

\begin{proof}
We define $Y:=C_1 \times \ldots \times C_n$ and denote by $F_i$ a fiber of 
the natural projection $p_i \colon Y \to C_i$ for $1\leq i\leq n$. It holds 
\[   
K_Y \equiv_{num} \sum_{i=1}^n (2g(C_i)-2)F_i \quad \mbox{ hence }\quad 
K_Y^n= n!\, 2^n\prod_{i=1}^{n}(g(C_i) - 1)\,.  
\]

\noindent  The product properties of $e$ and $\chi$ imply
\[
e(Y)=\prod_{i=1}^{n}\big(2-2g(C_i)\big)\  \makebox{ and } \  \chi(\mathcal O_Y)=   \prod_{i=1}^n \big(1- g(C_i)\big)\,.
\] 

\noindent  The statement follows since the quotient map $\pi \colon Y \to X$ is an unramified covering of degree $\deg(\pi)= |G|$.
\end{proof}

\begin{definition}
Let $X=\big(C_1 \times \ldots  \times C_n \big)/G$ be a variety isogenous to a product.  The 
diagonal subgroup of $G$  is defined as 
\[
G^0:= G \cap \big(\Aut(C_1)\times \ldots\times \Aut(C_n)\big).
\]
If $G^0=G$, the action of $G$ is called \textit{unmixed} and \textit{mixed} otherwise.  
\end{definition}

 In the unmixed case the action of $G$ can be restricted to each curve $C_i$. In this way we obtain homomorphisms 
\[
\psi_i\colon G \to \Aut(C_i) \quad \makebox{with kernels} \quad K_i:=\ker(\psi_i). 
\]

In the rest of the paper we shall consider only the case $n=3$ and assume that the action of $G$ is unmixed.
To simplify the notation we write 
\textit{threefold isogenous to a product} keeping in mind that we are in the unmixed case.

\section{Galois coverings of Riemann surfaces}\label{Riemannsurf}

In this section we  recall some principles of group actions on 
compact Riemann surfaces that we will use throughout the article. 
 In the first part we state
 \textit{Riemann's existence theorem}, 
in the second part the \textit{Chevalley-Weil formula} which describes the induced 
group action on the space of holomorphic $1$-forms.

\subsection{Group actions on Riemann surfaces}

Let $C$ be a compact Riemann surface,  $G$ be a finite group and 
$\psi\colon G \longrightarrow \Aut(C)$
be a group action of $G$ on $C$.
The homomorphism $\psi$ factors 
\[
\begin{xy}
  \xymatrix{
      G/K \ar[r]^{\overline{\psi}\quad} &  \Aut(C ) \\
      G \ar[ur]_{\psi} \ar[u]^{\pi}   \\				
							}
\end{xy},  
\]
where $K:=\ker(\psi)$ is the kernel of the action and $\pi$ the quotient map.
Via $\overline\psi$, we  consider $G/K$ as a subgroup of $\Aut(C)$.
For $p \in C$,  $G_p$ (resp. $\left(G/K\right)_p$) denotes the stabilizer group of $p$ for  the
$G$ (resp. $G/K$) action.

\begin{proposition}\label{nonfaithstab} \ 
\begin{itemize}
\item[i)] The group $(G/K)_p$ is cyclic for any point $p \in C$.
\item[ii)] $G_p= \langle g \rangle \cdot K$, 
for any $g \in G$ such that $ \langle \pi(g) \rangle =(G/K)_p$. 
\item[iii)] $G_{\psi(h)(p)}=hG_ph^{-1}=\langle hgh^{-1} \rangle \cdot K$ for any  $h \in G$. 
\end{itemize}
\end{proposition}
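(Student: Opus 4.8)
The plan is to reduce everything to the faithful action of $G/K$ on $C$ and then transport the information back to $G$ through the quotient map $\pi$.

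For part i), I would use that a finite group of holomorphic automorphisms of $C$ fixing a point $p$ acts faithfully on the one-dimensional tangent space $T_pC$, and is therefore isomorphic to a finite subgroup of $\mathbb{C}^\ast$, hence cyclic. The only step needing care is the faithfulness of the differential at $p$: if $\varphi \in \mathrm{Aut}(C)$ fixes $p$ with $d\varphi_p = \mathrm{id}$, then in a local holomorphic coordinate $z$ centred at $p$ one has $\varphi(z) = z + a_k z^k + \mathrm{O}(z^{k+1})$ with $a_k$ the first higher coefficient that could be nonzero; since $\varphi^{\circ n}(z) = z + n\,a_k z^k + \mathrm{O}(z^{k+1})$, finite order of $\varphi$ forces every such $a_k$ to vanish, so $\varphi = \mathrm{id}$. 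Applying this to $G/K \hookrightarrow \mathrm{Aut}(C)$ yields i).

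For part ii), the key point is the identity $G_p = \pi^{-1}\big((G/K)_p\big)$: because $K = \ker(\psi)$ fixes every point of $C$, an element $g \in G$ lies in $G_p$ precisely when $\psi(g)(p) = p$, i.e. when $\overline\psi(\pi(g))(p) = p$, i.e. when $\pi(g) \in (G/K)_p$. Now part i) makes $(G/K)_p$ cyclic, so for $g \in G$ with $\langle \pi(g)\rangle = (G/K)_p$ we have $(G/K)_p = \pi(\langle g\rangle)$ and hence
\[
G_p = \pi^{-1}\big(\pi(\langle g\rangle)\big) = \langle g\rangle \cdot K,
\]
the last equality holding because $K = \ker(\pi)$ is normal in $G$.

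Part iii) should then be a formal consequence. The identity $G_{\psi(h)(p)} = h\,G_p\,h^{-1}$ holds for any group action, since $k$ fixes $\psi(h)(p)$ exactly when $h^{-1}kh$ fixes $p$; combining it with ii) and once more with the normality of $K$ gives
\[
G_{\psi(h)(p)} = h\,G_p\,h^{-1} = h\,\langle g\rangle K\, h^{-1} = \langle hgh^{-1}\rangle \cdot K.
\]
I do not expect a genuine obstacle here: the only mildly delicate ingredient is the linearization argument in part i), which is classical, while the remainder is bookkeeping with $\pi$, $K$, and conjugation.
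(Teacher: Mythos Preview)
Your proposal is correct and follows essentially the same approach as the paper: the paper simply cites \cite[Proposition~III.3.1]{mir} for i) and remarks that ii) and iii) ``follow immediately, since $K=\ker(\psi)\triangleleft G$''. You have unpacked exactly those two ingredients---the linearization argument behind the cited result and the bookkeeping with $\pi^{-1}$ and normality of $K$---so there is no substantive difference, only more detail.
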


\begin{proof}
The first assertion is well known (cf.~\cite[Proposition III.3.1]{mir}).
The other two statements follow immediately, since $K=\ker(\psi) \trianglelefteq G$.
\end{proof}

\begin{definition}\label{DefGV}
Let $2 \leq m_1 \leq \ldots \leq m_r$ and  $g' \geq 0$ be integers and $H$ be a finite group.  
A \textit{generating vector} for $H$  of type $[g';m_1, \ldots ,m_r]$ is a $(2g'+ r)$-tuple 
$(d_1,e_1, \ldots , d_{g'},e_{g'};h_1, \ldots , h_r)$ of elements of $H$ such that: 
\begin{itemize}
\item[i)]
$ H = \langle d_1,e_1, \ldots , d_{g'},e_{g'},h_1, \ldots , h_r\rangle$,
\item[ii)]
$\displaystyle{\prod_{i=1}^{g'}[d_i,e_i]\cdot h_1 \ldots  h_r=1_H}$,
\item[iii)]
$\mathrm{ord}(h_i)= m_i$  for all $1 \leq i \leq r$. 
\end{itemize}
\end{definition}

The geometry behind this definition is 
\textit{Riemann's existence theorem} (cf.~\cite[Sections III.3 and III.4]{mir}).

\begin{theorem}[Riemann's existence theorem]\label{RsET}
A finite group $H$ acts faithfully and  biholomorphically on some compact Riemann surface $C$ of genus $g(C)$, 
if and only if  there exists a generating vector $(d_1,e_1, \ldots , d_{g'},e_{g'};h_1, \ldots , h_r)$ 
for  $H$ of type $[g';m_1,\ldots,m_r]$,
such that  the  Hurwitz formula holds:
\begin{equation*} 
2g(C) -2 = |H| \bigg( 2g'-2+\sum_{i=1}^r \frac{m_i - 1}{m_i} \bigg). 
\end{equation*}

In  this case $g'$ is the genus of the quotient Riemann surface $C' := C/H$
and the $H$-cover $C\to C'$ is branched in $r$ points $\{x_1, \ldots , x_r\}$ with
branching indices $m_1, \ldots ,m_r$, respectively.
Moreover, the cyclic groups $\langle h_1\rangle, \ldots , \langle h_r\rangle$ and
their conjugates provide the non-trivial stabilizers of the action of $H$ on $C$.
\end{theorem}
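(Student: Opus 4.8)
The statement to prove is the classical Riemann's existence theorem. The plan is to prove both directions via the correspondence between finite branched covers of a fixed base and conjugacy classes of surjections from the fundamental group of the punctured base.

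First I would set up the topological picture. Fix $C'$ a compact Riemann surface of genus $g'$ and distinct points $x_1,\dots,x_r \in C'$. Let $C'^* := C' \setminus \{x_1,\dots,x_r\}$. The fundamental group $\pi_1(C'^*, p_0)$ is well known to have the presentation
\[
\pi_1(C'^*) = \langle \gamma_1,\delta_1,\dots,\gamma_{g'},\delta_{g'},\beta_1,\dots,\beta_r \mid \textstyle\prod_{i=1}^{g'}[\gamma_i,\delta_i]\cdot\beta_1\cdots\beta_r = 1\rangle,
\]
where $\beta_j$ is (the class of) a small loop around $x_j$. For the ``if'' direction, given a generating vector $(d_1,e_1,\dots,d_{g'},e_{g'};h_1,\dots,h_r)$ for $H$ of type $[g';m_1,\dots,m_r]$, conditions i) and ii) say precisely that $\gamma_i \mapsto d_i$, $\delta_i \mapsto e_i$, $\beta_j \mapsto h_j$ defines a surjective homomorphism $\varphi\colon \pi_1(C'^*)\twoheadrightarrow H$. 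By the classification of covering spaces, $\ker\varphi$ corresponds to a connected unramified Galois cover $C^* \to C'^*$ with deck group $H$. One then fills in the punctures: near each $x_j$ the cover looks, on each component lying over a punctured disc, like $z\mapsto z^k$ where $k$ is the order of the image of the corresponding loop; by iii) these orders are the $m_j$. This produces a compact Riemann surface $C$ with a faithful $H$-action, branched over the $x_j$ with indices $m_j$, and with $C/H = C'$. Counting sheets and applying the Riemann--Hurwitz formula to $C \to C/H$ (degree $|H|$, with $|H|/m_j$ branch points of index $m_j$ over $x_j$) yields exactly Hurwitz' formula, so $g(C)$ is as claimed.

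For the ``only if'' direction, suppose $H$ acts faithfully and holomorphically on a compact Riemann surface $C$. The quotient $C' := C/H$ is again a compact Riemann surface; let $g'$ be its genus, let $x_1,\dots,x_r$ be the branch points of $C\to C'$, and let $m_j$ be the (common, by normality of the action) ramification index over $x_j$ — note each $m_j \geq 2$ since it is a genuine branch point, and the point stabilizers are cyclic by Proposition \ref{nonfaithstab} i) (applied with trivial kernel). Restricting to $C^* := C \setminus (\text{ramification points})$ gives an unramified Galois cover of $C'^* = C' \setminus\{x_j\}$, hence a surjection $\varphi\colon \pi_1(C'^*) \twoheadrightarrow H$. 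Taking $d_i := \varphi(\gamma_i)$, $e_i := \varphi(\delta_i)$, $h_j := \varphi(\beta_j)$ recovers a generating vector; surjectivity gives i), the single relation in $\pi_1(C'^*)$ gives ii), and the local monodromy around $x_j$ being of order $m_j$ (up to reordering, hence the permutation $\sigma$ in iii)) gives iii). Riemann--Hurwitz again supplies Hurwitz' formula. The final sentence about stabilizers follows because a nontrivial stabilizer fixes a ramification point, the fibre over a branch point $x_j$ is a single $H$-orbit, and the stabilizer of the distinguished preimage determined by $\beta_j$ is generated by $h_j$ (Proposition \ref{nonfaithstab} ii)); all other nontrivial stabilizers are conjugate to one of these.

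The main obstacle is the passage between the topological/covering-space statement and the holomorphic one: one must argue that the unramified topological cover $C^* \to C'^*$ carries a unique complex structure making the projection holomorphic (pull back the structure from $C'^*$), and, more delicately, that this structure extends across the punctures to a \emph{compact} Riemann surface on which $H$ acts by biholomorphisms — this is where the local model $z\mapsto z^k$ and the removable-singularity/normalization argument enter. Since the paper explicitly cites \cite[Sections III.3 and III.4]{mir} for this, I would treat these analytic extension facts as known and concentrate the exposition on the group-theoretic bookkeeping (the presentation of $\pi_1(C'^*)$, the dictionary with generating vectors, and the Riemann--Hurwitz count), which is what the rest of the paper actually uses.
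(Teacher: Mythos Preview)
The paper does not actually prove this theorem: it is stated as a classical result with a reference to \cite[Sections~III.3 and III.4]{mir}, and no argument is given. Your outline is the standard covering-space proof (surjections $\pi_1(C'^*)\twoheadrightarrow H$ $\leftrightarrow$ Galois covers, local model $z\mapsto z^k$ to fill in punctures, Riemann--Hurwitz for the genus count), which is precisely what one finds in the cited sections of Miranda; there is nothing to compare and no gap to report.
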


\begin{definition}
Let $G$ be a finite group and $K$ a normal subgroup. Let 
\[
V:=(d_1,e_1, \ldots ,d_{g'},e_{g'};h_1, \ldots , h_r)
\] 
be a generating vector for $G/K$. 
Let $g_i \in G$ be a representative for the element $h_i$ in the generating vector $V$. 
The \textit{stabilizer set of $V$} is defined as 
\[
\Sigma_{V}:= 
\left(\bigcup_{g \in G}\bigcup_{i \in \mathbb Z}\bigcup_{j = 1}^r\left \{  gg_j^ig^{-1}\right\} 	
\cup \{1_G\}
\right) \cdot K
\,. 
\]
\end{definition}

According to Proposition \ref{nonfaithstab} and Riemann's existence theorem, it holds:

\begin{corollary}\label{stabsetV}
Let $C$ be a compact Riemann surface, $\psi: G \to \Aut(C)$  a group action with kernel $K$ and 
$V$  a generating vector for $G/K$ associated to $\overline{\psi}: G/K \to \Aut(C)$. Then: 
\[
\Sigma_{V}= \bigcup_{p \in C} G_p\,.
\]
\end{corollary}

\subsection{The Chevalley-Weil formula}\label{ChevWeil}

Let $C$ be a compact Riemann surface, 
$G$ be a finite group,  $\psi\colon G \rightarrow \Aut(C) $ be a group  action  with  kernel 
$K:=\ker(\psi)$ and  $\pi\colon G\to G/K$ be the quotient map.

Let $\varphi$ be the representation of $G$ associated to $\psi$ via pull-back of holomorphic $1$-forms: 
\begin{equation}\label{rep}
\varphi\colon G \to \mathrm{GL}\left( H^{1,0}(C) \right)\,,  \quad  g \mapsto [ \omega \mapsto \psi(g^{-1})^{\ast}\omega]\, 
\end{equation}
and  $\chi_{\varphi}$ its character. The aim of this subsection is to describe this character. 
For details about character theory and complex linear 
representations we refer to \cite{Isaacs76}.

Since the following diagram commutes
\[
\begin{xy}
  \xymatrix{
      G/K \ar[r]^{\overline{\varphi}\quad} &  \mathrm{GL}\big(H^{1,0}(C)\big)  \\
      G \ar[ur]_{\varphi}  \ar[u]^{\pi}   \\				
							}
\end{xy},
\]
the character of $\varphi$ is determined by the character of  $\overline{\varphi}$.
For this reason it suffices to consider the case $K=\{1_G\}$ and 
we identify $G$ with $\psi(G) \leq \Aut(C)$.

\begin{remark}\label{orth} The set of irreducible characters $Irr(G)$ of a finite group $G$ is an orthonormal 
basis of the vector space of class functions $CF(G)$
of the group $G$ with respect to the Hermitian product 
\begin{equation*}
\langle\alpha, \beta \rangle:=\frac{1}{|G|}\sum_{g\in G}\alpha(g)\overline{\beta(g)}, 
\quad \quad \makebox{ $\alpha, \beta \in CF(G)$}.
\end{equation*} 
\end{remark}

\noindent The character $\chi_{\varphi}$ has the decomposition
\[
\chi_{\varphi} = \sum_{\chi \in Irr(G)} \langle \chi_{\varphi}, \chi\rangle \cdot \chi.
\]
The Chevalley-Weil formula provides a way to compute the integral coefficients 
$\langle\chi_{\varphi},\chi \rangle$.

\begin{definition}
Let $(d_1,e_1, \ldots , d_{g'},e_{g'};h_1, \ldots , h_r)$
be a generating vector  of type $[g';m_1, \ldots ,m_r]$ for $G$ and 
$\varrho \colon G \to \mathrm{GL}(W)$ be a representation.
Let $1\leq i \leq r$ and   $0 \leq \alpha \leq m_i-1$,
we denote by 
$N_{i,\alpha}$ the multiplicity of $ \exp\Big(\frac{2\pi \sqrt{-1} }{m_i}\Big)^\alpha$ as eigenvalue of  $\varrho(h_i)$.
\end{definition}

\begin{theorem}[Chevalley-Weil Formula, cf.~\cite{ChevWeil}]
Let 
$\psi\colon G \rightarrow \Aut(C)$
be a faithful group action of a finite group $G$ on a compact Riemann surface $C$ and 
$
(d_1,e_1, \ldots , d_{g'},e_{g'};h_1, \ldots , h_r)
$
be an associated generating vector of type $[g';m_1, \ldots ,m_r]$. Then, 
for an irreducible representation 
$\varrho$  of $G$ with character 
$\chi_\varrho$ and degree $d$, it holds  
\begin{equation*}
\langle\chi_\varrho, \chi_{\varphi}\rangle 
= d(g'-1) + \sum_{i=1}^r \sum_{\alpha=1}^{m_i-1} \frac{\alpha \cdot N_{i, \alpha}}{m_i}  +  \langle \chi_{\varrho}, \chi_{triv}  \rangle,
\end{equation*}
where $\chi_{triv}$ is the trivial character.  
\end{theorem}
 \begin{proof}
  Let $L$ be the class function $\chi_{triv} -\chi_\varphi$.
By bilinearity 
$ \langle \chi_{\varrho}, \chi_{\varphi}  \rangle   = - \langle \chi_{\varrho}, L \rangle +
\langle \chi_{\varrho}, \chi_{triv}  \rangle$ and the inner product $\langle  \chi_{\varrho},  L \rangle$ is
\begin{equation*}
\langle  \chi_{\varrho},  L \rangle = \frac{1}{|G|} \sum_{g \in G } \chi_{\varrho}(g) \overline{L(g)} = 
\frac{1}{|G|} \chi_{\varrho}(\mathrm{1_G}) \overline{L(\mathrm{1_G})} + 
\frac{1}{|G|}\sum_{\substack{g \in G \\ g \ne \mathrm{id}}} \chi_{\varrho}(g) \overline{L(g)}. 
\end{equation*}
The value  $\chi_{\varrho}(\mathrm{1_G})$ is the degree $d$ of  $\varrho$ and 
 $L(\mathrm{1_G}) = 1- g(C)$.
On the other hand, 
\[
L(g)= 1-\chi_\varphi(g) =\sum_{ p \in Fix(g)} \dfrac{1}{1 -  J_p(g)} \quad \makebox{for all} \quad g \neq 1_G\,,
\] 
where  the latter  equality is the Eichler trace formula (see \cite[Theorem V.2.9]{FK80}) and 
$ J_p(g)$ denotes the Jacobian of the automorphism $g$ at the point $p$.


Since $\overline{L(g)} =L(g^{-1})$, it follows 
\[
\frac{1}{|G|}\sum_{\substack{g \in G \\ g \ne \mathrm{id}}}\chi_{\varrho}(g) \overline{L(g)} = 
\dfrac{1}{|G|}\sum_{\substack{g \in G \\ g \ne \mathrm{id}}} \sum_{ p \in Fix(g)} \dfrac{\chi_{\varrho}(g)}{1 -  J_p(g^{-1})}
=\dfrac{1}{|G|}\sum_{p \in Fix(C) } \sum_{\substack{g \in G_p \\ g \ne \mathrm{id}}} \frac{\chi_{\varrho}(g)}{1 -  J_p(g^{-1})},
\]
where $Fix(C):=\{p\in C\mid G_p\neq \{1_G\}\}$. 

Let $f\colon C\to C/G$ be the quotient map  and $\{x_1,\ldots, x_r\} $ be the 
branch locus. For all $1 \leq i \leq r$, there exists a point  $p_i \in f^{-1}(x_i)$ with 
$G_{p_i}= \langle h_i \rangle$  and  for any $g \in G$ we have $G_{g(p_i)}=\langle g h_i g^{-1}\rangle$;
moreover, every $p\in Fix(C)$ maps to a branch point.
As shown in \cite[4.7.2 Satz]{lamotke}, the element $h_i$ is the unique element in $G_{p_i}$ such that  
\[
J_{p_i}(h_i)= \exp\left(\frac{2\pi \sqrt{-1} }{m_i}\right)=: \xi_{m_i}\,,
\]
hence the Jacobian  $J_{g(p_i)}(gh_i^l g^{-1})$ equals $\xi_{m_i}^l$ for  $l \in \mathbb{Z}$. 
Since $\chi_{\varrho}$ is a class function  $\chi_{\varrho}(h_i^l)= \chi_{\varrho}(gh_i^lg^{-1})$. 
This implies 
\[
\dfrac{1}{|G|}\sum_{p \in Fix(C) } \sum_{\substack{g \in G_p \\ g \ne \mathrm{id}}}\frac{\chi_{\varrho}(g)}{1 -  J_p(g^{-1})}
=
\frac{1}{|G|} \sum_{i=1}^r \frac{|G|}{m_i} \sum_{l=1}^{m_i-1} \frac{\chi_\varrho(h_i^l) }{1 - \xi_{m_i}^{-l}}
=
 \sum_{i=1}^r \frac{1}{m_i} \sum_{l=1}^{m_i-1} \frac{\sum_{\alpha = 0}^{m_i-1} 
N_{i,\alpha}\cdot (\xi_{m_i}^{\alpha})^l}{1 - \xi_{m_i}^{-l}}.
\]

The last member of the equation above can be simplified using 
the following equality  for  Fourier-Dedekind sum (c.f. \cite[Equation 1.8]{BR07}):
\[
\sum_{l=1}^{m_i-1} \frac{ (\xi_{m_i}^{l})^ \alpha}{1 - \xi_{m_i}^{-l}} = \frac{m_i -1}{2} - \alpha 
\quad \makebox{for all} \quad 0 \leq  \alpha  \leq m_i - 1, 
\]
whence  
\[
\langle \chi_{\varrho}, L \rangle = \frac{1}{|G|} d (1- g(C)) +  \sum_{i=1}^r \sum_{\alpha =0}^{m_i-1} 
\frac{N_{i,\alpha}}{m_i} \bigg(\frac{m_i -1}{2} - \alpha \bigg)\,.
\]
In combination with Hurwitz' formula we obtain 
$$
\langle \chi_{\varrho}, \chi_{\varphi}  \rangle =    \langle \chi_{\varrho}, \chi_{triv}  \rangle  + d(g' - 1 ) +\sum_{ i = 1 }^r
\left[ d \bigg(\frac{m_i-1}{2 m_i}\bigg) - 
\sum_{\alpha =0}^{m_i-1} N_{i,\alpha}\bigg(\frac{m_i -1}{2m_i} - \frac{\alpha}{m_i} \bigg)\right]\,.
$$

The proof of the  Chevalley-Weil formula is finished because 
the degree of the representation $\varrho$ is $d = \sum_{\alpha =0}^{m_i-1} N_{i,\alpha}$, for all $1 \leq i \leq r$. 
\end{proof}

\begin{comprem}
For a given finite group $G$, it is in general very difficult to determine all its 
irreducible representations even using a computer algebra system, e.g. MAGMA (\cite{magma}).
MAGMA is able to compute the irreducible representations of $G$ only over fields of positive characteristic and,
if $G$ is solvable, over cyclotomic fields. 
However MAGMA can determine the \textit{character table} of any  sufficiently small finite group. This is indeed  
enough to apply the Chevalley-Weil formula according to  
the following classical argument. 

Let $\varrho \colon H \to \mathrm{GL}(W)$ be a representation of degree $n$ and
$h\in H$ an element of order $m$.
To determine the eigenvalues 
$\lambda_1,\ldots, \lambda_n$ of $\varrho(h)$,
it suffices to know the character $\chi_\varrho$. \\
The first observation is 
\[
\chi_\varrho(h^k) =\lambda_1^k + \ldots  + \lambda_n^k.
\]
Recall that the power sum polynomials $p_k:=\sum_{i=1}^n x_i^k$ are 
related to the \textit{elementary symmetric polynomials} $s_i$
via the \textit{Newton identities}: 
\[
k\cdot s_k= \sum_{i=1}^k (-1)^{i-1} s_{k-i} p_i\,, \quad 1 \leq k \leq n.
\]
Hence we can compute the values $s_k(\lambda_1, \ldots ,\lambda_n)$ 
recursively from $\chi_\varrho(h), \ldots ,\chi_\varrho(h^k)$ and the previous ones  
\[
s_0(\lambda_1, \ldots ,\lambda_n), \ldots , s_{k-1}(\lambda_1, \ldots ,\lambda_n). 
\]
In this way we obtain the characteristic polynomial $f_h$ of  $\varrho(h)$: 
\[
f_h(x)=\sum_{k=0}^n a_k x^k,  \quad \quad 
a_k= (-1)^{n-k} s_{n-k}(\lambda_1,\ldots, \lambda_n). 
\]
Finally, since the roots of $f_h(x)$ are powers of $\xi_{m}$,
one can  factorize $f_h(x)$ and determine the spectrum of $\varrho(h)$.
\end{comprem}

\section{Group theoretical description and Invariants of Threefolds  Isogenous to a Product}\label{group_descr}
 
The aim of this section is to give a \textit{group theoretical description} of 
threefolds isogenous to a product. In the first part we
introduce the \textit{algebraic datum} of a threefold isogenous to a product $X$. 
Based on this definition we show how to compute the \textit{Hodge diamond} and the \textit{fundamental group} of $X$
from the algebraic datum.

\begin{definition}\label{def_min_real}
Let $X:=(C_1 \times C_2 \times C_3)/G$ be a  threefold isogenous to a product.
\begin{itemize}
\item[i)]
The action of $G$ is \textit{minimal} if 
$K_i \cap K_j = \lbrace 1_G \rbrace$ for all  $1 \leq i < j \leq 3$.
\item[ii)]
The action of $G$  is \textit{absolutely faithful} if  $K_i=\{1_G\}$ for all $1 \leq i \leq 3$. 
\end{itemize}
\end{definition}

\begin{proposition}\label{faithfulassum}\cite[p.~105]{Cat08},
Every threefold  isogenous to a product 
admits a unique minimal realization.
\end{proposition}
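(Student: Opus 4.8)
The plan is to argue by induction on $|G|$. If the given realization $(C_1\times C_2\times C_3)/G$ is already minimal, there is nothing to prove. Otherwise $K_i\cap K_j\neq\{1_G\}$ for some $1\le i<j\le 3$, and after relabelling the three factors we may assume $N:=K_1\cap K_2\neq\{1_G\}$. Since $K_1$ and $K_2$ are normal in $G$, so is $N$; the idea is to replace the third curve by $C_3/N$ and $G$ by $G/N$, obtaining a strictly smaller realization of the same threefold $X$.

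First I would check that $N$ acts freely on $C_3$: an element $n\in N$ acts trivially on $C_1$ and on $C_2$, so a fixed point $x_3$ of $\psi_3(n)$ would produce a fixed point $(x_1,x_2,x_3)$ of $n$ on the product for every choice of $x_1,x_2$, contradicting freeness of the $G$-action unless $n=1_G$. Hence $C_3':=C_3/N$ is a smooth projective curve, and the Hurwitz formula for the unramified cover $C_3\to C_3'$ gives $g(C_3')-1=\bigl(g(C_3)-1\bigr)/|N|$, a positive integer, so $g(C_3')\ge 2$. Because $N\subset K_1\cap K_2$ acts trivially on $C_1\times C_2$ and is normal in $G$, the diagonal $G$-action descends to a diagonal action of $G':=G/N$ on $C_1\times C_2\times C_3'$, and the quotient map $C_1\times C_2\times C_3\to C_1\times C_2\times C_3'$ (which is simply the quotient by $N$) identifies $X$ with $(C_1\times C_2\times C_3')/G'$.

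The core of the argument is to verify that $G'$ still acts freely. Suppose $gN$ fixes $(x_1,x_2,N\cdot x_3)$. Then $\psi_1(g)(x_1)=x_1$, $\psi_2(g)(x_2)=x_2$ and $\psi_3(g)(x_3)=\psi_3(n)(x_3)$ for some $n\in N$; hence $n^{-1}g$ fixes $x_3$, and it also fixes $x_1$ and $x_2$ since $n\in K_1\cap K_2$. Thus $n^{-1}g$ fixes $(x_1,x_2,x_3)$ on the product, so $n^{-1}g=1_G$ and $gN=1_{G'}$. Consequently $(C_1\times C_2\times C_3')/G'$ is again a threefold isogenous to a product with $|G'|=|G|/|N|<|G|$, and the induction hypothesis yields a minimal realization of $X$. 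The only delicate point is this last freeness check; the rest is routine bookkeeping with the kernels $K_i$ together with the elementary fact that dividing a product of curves by a normal subgroup that acts on a single factor yields again a product of curves.
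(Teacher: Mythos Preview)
Your proof is correct. The paper takes a different, one-step route: rather than iterating, it passes directly to the quotient by the normal subgroup $K:=K_{1,2}\cdot K_{1,3}\cdot K_{2,3}$ (writing $K_{i,j}:=K_i\cap K_j$), noting that each $K_{i,j}$ acts freely on the complementary curve $C_k$ and trivially on the other two, so that $(C_1\times C_2\times C_3)/K\cong C_1/K_{2,3}\times C_2/K_{1,3}\times C_3/K_{1,2}$; the resulting action of $H:=G/K$ then has kernels $\overline{K_k}=(K_k\cdot K)/K$, which the paper asserts already satisfy $\overline{K_i}\cap\overline{K_j}=\{1_H\}$. The paper's approach has the advantage of exhibiting the minimal realization explicitly in a single step, but it leaves the verifications (freeness of the $H$-action, triviality of the new pairwise intersections) to the reader. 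Your inductive argument trades this directness for a simpler situation at each stage---only one factor changes and only one pairwise intersection is killed---together with the explicit freeness check you carry out, so it is a bit more self-contained even if it does not immediately display the final minimal model.
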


From now on, when we consider a realization $\big(C_1 \times C_2 \times C_3\big)/G$ of a threefold isogenous to a product $X$, 
we assume it is minimal. However, it is not possible to assume that the action of $G$  is {absolutely faithful}.

We shall associate to a threefold isogenous to a product $X$ certain algebraic and numerical data. According to Riemann's existence theorem, to each action $G/K_i\to \Aut(C_i)$ corresponds
a generating vector $V_i$ for $G/K_i$ of type $T_i$; note that only the type $T_i$ is uniquely determined.  
By Corollary \ref{stabsetV} the stabilizer set  
$\Sigma_{V_i}$ is the set of element of $G$ admitting at least one fixed point on $C_i$.
Therefore the freeness of the $G$-action on the product is reflected by the condition that the triple 
$(V_1,V_2,V_3)$ is \textit{disjoint}: 
\[
\Sigma_{V_1} \cap \Sigma_{V_2} \cap \Sigma_{V_3} = \lbrace 1_G \rbrace.
\]

\begin{definition} \label{algebraicdata}
To the threefold $X$ we attach the tuple $(G, K_1, K_2, K_3, V_1, V_2, V_3)$,
and  call it an  \textit{algebraic datum} of $X$.
The  \textit{numerical datum} of $X$ is the tuple 
\[
(|G|, |K_1|, |K_2|, |K_3|, T_1, T_2, T_3).
\]
\end{definition}

\begin{remark}\label{converse}
Let $G$ be a finite group and $K_i \trianglelefteq G$ be normal subgroups such that
$K_i\cap K_j=\{1_G\}$ for all $1\leq i <j \leq 3$. Let  $(V_1,V_2,V_3) $ be a disjoint triple of 
generating vectors, where $V_i$ is a generating vector for $G/K_i$.
Then, by Riemann's existence theorem, there exists a threefold isogenous to a product
with  algebraic datum $(G, K_1, K_2, K_3, V_1, V_2, V_3)$.
\end{remark}

Next we explain how to compute the Hodge diamond of a threefold isogenous to
a product $X$ from an algebraic datum of $X$.

\begin{proposition}\label{invar}
Let $Y$ be a projective manifold and $G$ be a finite group of automorphisms acting freely on $Y$, then 
\[
H^{p,q}(Y/G) \simeq  H^{p,q}(Y)^G.
\]
\end{proposition}
\begin{proof}
The quotient map $\pi\colon Y \to Y/G$ is a finite unramified  covering.
By \cite[Proposition 3G.1]{Hatcher} we have an isomorphism 
$\pi^*\colon H^k(Y/G,\mathbb C)\stackrel{\sim}{\longrightarrow} H^k(Y,\mathbb C)^G$.
Since a holomorphic map induces a morphism of Hodge structures (c.f. \cite[Section 7.3.2]{Voisin07}),
considering the Hodge decomposition on both sides,  we get a diagonal isomorphism 
\[\pi^*\colon\bigoplus_{p+q=k}H^{p,q}(Y/G) \stackrel{\sim}{\longrightarrow}  \bigg(\bigoplus_{p+q=k}H^{p,q}(Y)\bigg)^G=
 \bigoplus_{p+q=k}H^{p,q}(Y)^G\,,\]
and the statement follows.
\end{proof}

The proposition above motivates the following definition.

\begin{definition}
Let $X:=\big(C_1 \times C_2 \times C_3\big)/G$  be a threefold isogenous to a product.  
We define representations of $G$ via pull-back:
\[
\phi_{p,q} \colon G  \to  \mathrm{GL}\big(H^{p,q}(C_1 \times C_2 \times C_3)\big), \quad 
 g \mapsto  [\omega \mapsto (g^{-1})^{\ast}\omega].
\]
The characters of $\phi_{p,q}$ are denoted by $\chi_{p,q}$. 
\end{definition}

As a direct consequence of Proposition \ref{invar} the Hodge numbers $h^{p,q}(X)$ are
given by $\langle  \chi_{p,q}, \chi_{triv}  \rangle$.

The group actions $\psi_i\colon G \to \Aut(C_i)$ induce representations 
\[
\varphi_i\colon G \rightarrow \mathrm{GL}\big(H^{1,0}(C_i) \big)   
\quad  g \mapsto [ \omega \mapsto \psi_i(g^{-1})^{\ast}\omega]\,
\]
with characters $\chi_{\varphi_i}$, see Section \ref{ChevWeil}.

\begin{theorem}\label{KunnChar}
For the characters $\chi_{p,q}$ it holds:
\begin{itemize}
\item[i)]
$\chi_{1,0} = \chi_{\varphi_1} + \chi_{\varphi_2} + \chi_{\varphi_3}$,
\item[ii)]
$\chi_{1,1} = 2 \mathfrak Re (\chi_{\varphi_1} \overline{\chi_{\varphi_2}} + \chi_{\varphi_1} \overline{\chi_{\varphi_3}}
+ \chi_{\varphi_2} \overline{\chi_{\varphi_3}}) + 3\chi_{triv}$,
\item[iii)]
$\chi_{2,0} = \chi_{\varphi_1} \chi_{\varphi_2} + \chi_{\varphi_1}  \chi_{\varphi_3} + \chi_{\varphi_2}  \chi_{\varphi_3}$,
\item[iv)]
$\chi_{2,1} = \overline{\chi_{\varphi_1}}  \chi_{\varphi_2}  \chi_{\varphi_3}  +  \chi_{\varphi_1}  \overline{\chi_{\varphi_2}}  \chi_{\varphi_3}
+ \chi_{\varphi_1}  \chi_{\varphi_2}  \overline{\chi_{\varphi_3}} + 2(\chi_{\varphi_1} + \chi_{\varphi_2} + \chi_{\varphi_3})$,
\item[v)]
$\chi_{3,0} = \chi_{\varphi_1}  \chi_{\varphi_2}  \chi_{\varphi_3}$.
\end{itemize}
\end{theorem}

\begin{proof}
According to  K\"unneth's formula \cite[p.103-104]{G-H}: 
\[
H^{p,q}(C_1 \times C_2 \times C_3)= \bigoplus_{\substack {s_1+s_2+s_3=p \\ t_1 + t_2+t_3 =q}}
H^{s_1,t_1}(C_1) \otimes H^{s_2,t_2}(C_2) \otimes H^{s_3,t_3}(C_3).
\]
Let $\omega=\omega_1 \otimes \omega_2 \otimes \omega_ 3$ be a pure tensor contained in a direct summand of this decomposition.
Since the action of $G$ is diagonal, 
the tensors $\omega$ and $(g^{-1})^{\ast}\omega$ are in the same summand for all $g \in G$.  
This implies that each summand is a subrepresentation of $H^{p,q}(Y)$  and the character
$\chi_{p,q}$ is the sum of the characters of these subrepresentations.
By definition of $\chi_{\varphi_i}$ and the fact that the character of a tensor product is the product of the characters, the statement follows.
\end{proof}

According to the theorem above the characters $\chi_{p,q}$ are given in terms of the characters 
$\chi_{\varphi_i}$ which can be computed from an algebraic datum of $X$ using 
the  Chevalley-Weil formula. 
As a by-product the Hodge numbers of $X$ can be determined from an algebraic datum of $X$.

In the last part of this section we briefly explain how to compute the fundamental group  of a threefold  
isogenous to a product $X$ 
from an algebraic datum of $X$.
This topic is treated in greater generality in \cite{DP10} 
and we refer to it for further details.

\begin{definition}\label{gv}
Let  $g'\geq 0 \mbox{ and  } m_1, \ldots, m_r \geq 2$
be integers.
The orbifold surface group of type $T=[g';m_1, \ldots, m_r]$ is defined as: 
\[
\mathbb{T}(T):=
\langle a_1,b_1,\ldots, a_{g'},b_{g'},c_1, \ldots, c_r ~ \big\vert ~
 c_1^{m_1}, \ldots, c_r^{m_r},\prod_{i=1}^{g'} [a_i,b_i]\cdot c_1 \cdots c_r\rangle. 
\]
\end{definition}

\begin{lemma}
Let  $X:=(C_1\times C_2 \times C_3)/G$ be  a threefold isogenous to a product.
The fundamental group $\pi_1(X)$
sits in the following short exact sequence:
$$1\rightarrow\pi_1(C_1)\times\pi_1(C_2)\times \pi_1(C_3)\rightarrow \pi_1(X)\rightarrow G\rightarrow 1\,.$$
\end{lemma}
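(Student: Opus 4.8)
The plan is to exhibit $\pi_1(X)$ as a quotient coming from the action of $G$ on the universal cover of $Y:=C_1\times C_2\times C_3$, which is a contractible space. Since each $C_k$ has genus $\geq 2$, the universal cover $\widetilde{C_k}$ is the unit disc, so $\widetilde Y=\widetilde{C_1}\times\widetilde{C_2}\times\widetilde{C_3}$ is contractible and $Y$ is a $K(\pi,1)$ with $\pi_1(Y)=\pi_1(C_1)\times\pi_1(C_2)\times\pi_1(C_3)$. Because $G$ acts freely on $Y$, the quotient map $\pi\colon Y\to X$ is an unramified covering; in fact $\widetilde Y\to X$ is a covering map whose deck transformation group $\Gamma$ is an extension of $G$ by $\pi_1(Y)$. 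Since $\widetilde Y$ is contractible, $X$ is also a $K(\pi,1)$ and $\pi_1(X)\cong\Gamma$.

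First I would make precise the group $\Gamma$. Let $\widetilde Y\to Y$ be the universal cover with deck group $N:=\pi_1(Y)=\pi_1(C_1)\times\pi_1(C_2)\times\pi_1(C_3)$. The $G$-action on $Y$ lifts, after choosing a basepoint, to a group $\Gamma$ of homeomorphisms of $\widetilde Y$ containing $N$ as a normal subgroup; concretely $\Gamma$ is the set of lifts to $\widetilde Y$ of the elements of $G\subset\Aut(Y)$ (equivalently, $\Gamma$ is the preimage of $G$ under $\operatorname{Homeo}(\widetilde Y)\to\operatorname{Homeo}(Y)$ restricted to deck-like maps). Because $G$ acts freely on $Y$, the combined action of $\Gamma$ on $\widetilde Y$ is free and properly discontinuous with quotient $X$, so $\Gamma$ is the group of deck transformations of the covering $\widetilde Y\to X$; as $\widetilde Y$ is simply connected (indeed contractible), $\Gamma\cong\pi_1(X)$.

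Next I would read off the short exact sequence. By construction, an element of $\Gamma$ lies over the identity of $G$ exactly when it is a deck transformation of $\widetilde Y\to Y$, i.e. lies in $N$; thus $N\triangleleft\Gamma$ and the ``project to $G$'' map $\Gamma\to G$ has kernel exactly $N$. It is surjective because every element of $G$, being an automorphism of $Y$, lifts to $\widetilde Y$. This yields
\[
1\longrightarrow \pi_1(C_1)\times\pi_1(C_2)\times\pi_1(C_3)\longrightarrow \pi_1(X)\longrightarrow G\longrightarrow 1,
\]
as claimed. Alternatively, and perhaps more cleanly, one can invoke the homotopy exact sequence of the fibration-like covering $\pi\colon Y\to X$: since $\pi$ is a covering map it induces an injection $\pi_1(Y)\hookrightarrow\pi_1(X)$ with image a normal subgroup of index $|G|$, and the quotient $\pi_1(X)/\pi_1(Y)$ is the deck group $G$; combined with the Künneth-type computation $\pi_1(Y)=\prod_k\pi_1(C_k)$ this gives the sequence directly.

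The only genuinely delicate point is checking that the extension is the ``expected'' one, i.e. that the subgroup appearing is precisely $\pi_1(Y)$ sitting inside $\pi_1(X)$ via $\pi_*$ and that the quotient is $G$ with the natural identification; this is where one must be careful that $G\hookrightarrow\Aut(Y)$ really does act freely (part of the definition) so that $\pi$ is an honest covering and $\pi_*$ is injective. Everything else is formal covering-space theory together with the standard fact that a curve of genus $\geq 2$ is aspherical, so that $Y$ and hence $X$ are $K(\pi,1)$'s. I would remark that the extension class is generally non-trivial, which is why $\pi_1(X)$ need not be a direct product, and refer to \cite{DP10} for the finer analysis of this extension in terms of the algebraic datum.
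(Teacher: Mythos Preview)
The paper does not actually supply a proof of this lemma; it is stated without proof, the surrounding subsection having already deferred to \cite{DP10} for details. Your argument is correct and is precisely the standard covering-space justification one would give: since $G$ acts freely on $Y=C_1\times C_2\times C_3$, the quotient map $Y\to X$ is a regular covering with deck group $G$, so $\pi_*$ embeds $\pi_1(Y)\cong\prod_k\pi_1(C_k)$ as a normal subgroup of $\pi_1(X)$ with quotient $G$. The additional observation that $\widetilde Y$ is contractible (so $X$ is a $K(\pi,1)$) is true and worth noting, though strictly speaking not needed for the exact sequence itself---the covering-space argument you sketch in your ``alternative'' paragraph already suffices.
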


Let $(G, K_1, K_2, K_3, V_1, V_2, V_3)$ be an algebraic datum of a threefold isogenous to product $X$.
The generating vectors $V_i$ of $H_i:=G/K_i$ determine a homomorphism $\gamma_i\colon \mathbb T(T_i) \to H_i$. 
The kernel of $\gamma_i$ is isomorphic to $\pi_1(C_i)$:
\[
1\longrightarrow \pi_1(C_i) \longrightarrow \mathbb T(T_i)\stackrel{\gamma_i}{\longrightarrow} H_i \longrightarrow 1 \,. 
\]
Let $\Gamma_i:= G\times_{H_i} \mathbb T(T_i)$ be the fibre product, 
 $\zeta_i\colon \Gamma_i \to G$ be the projection on the first factor and define 
\[
\mathbb G:=\lbrace(x,y,z)\in \Gamma_1\times \Gamma_2 \times \Gamma_3\mid \zeta_1(x)=\zeta_2(y)=\zeta_3(z)\rbrace.
\]
There is the following
description of the fundamental group and the first homology group of $X$.

\begin{proposition}[{c.f.\cite[Proposition 3.3]{DP10}}]
Let $X$ be a threefold isogenous to a product. Then 
$\pi_1(X)\cong \mathbb G$ and $H_1(X, \mathbb Z)\cong \mathbb G^{ab}$. 
\end{proposition}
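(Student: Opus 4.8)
The plan is to deduce the statement from the general machinery of \cite{DP10} applied to the product situation, so the proof is short. First I would recall the setup built up just before the statement: for each $i$ we have the extension $1\to \pi_1(C_i)\to \mathbb T(T_i)\stackrel{\gamma_i}{\to} H_i\to 1$ coming from the generating vector $V_i$, the fiber product $\Gamma_i = G\times_{H_i}\mathbb T(T_i)$ with its projection $\zeta_i\colon\Gamma_i\to G$, and the iterated fiber product $\mathbb G = \{(x,y,z)\in \Gamma_1\times\Gamma_2\times\Gamma_3\mid \zeta_1(x)=\zeta_2(y)=\zeta_3(z)\}$. The point is that $\mathbb G$ is exactly the fundamental group of the orbifold covering of $X$ described by the algebraic datum, and \cite[Proposition 3.3]{DP10} identifies it with $\pi_1(X)$; I would then simply invoke that proposition.

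In a bit more detail, the key step is to match our data with the hypotheses of \cite[Proposition 3.3]{DP10}. The orbifold fundamental group of $C_i/H_i$ is $\mathbb T(T_i)$ and $\gamma_i$ is the monodromy homomorphism realizing the $H_i$-cover $C_i\to C_i/H_i$ with $\ker\gamma_i\cong\pi_1(C_i)$; on the threefold side, $X=(C_1\times C_2\times C_3)/G$ where $G$ maps onto each $H_i$ via $p_i$ with kernel $K_i$. The fiber product $\Gamma_i$ is the orbifold fundamental group of the quotient $(C_i\times \text{point})$-type cover twisted by $G\to H_i$, and taking the triple fiber product over $G$ reflects that $G$ acts diagonally on the product: an element of $\pi_1(X)$ projects to an element $g\in G$, and over each factor its lift to $\mathbb T(T_i)$ is constrained to be compatible with $g$ via $\gamma_i$ and $p_i$, which is precisely the condition $\zeta_1(x)=\zeta_2(y)=\zeta_3(z)=g$. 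This is the content of \cite[Proposition 3.3]{DP10} specialized to $n=3$, and the short exact sequence $1\to \pi_1(C_1)\times\pi_1(C_2)\times\pi_1(C_3)\to\pi_1(X)\to G\to 1$ from the preceding Lemma is recovered by projecting $\mathbb G$ onto $G$, whose kernel is $\ker\zeta_1\times\ker\zeta_2\times\ker\zeta_3\cong\pi_1(C_1)\times\pi_1(C_2)\times\pi_1(C_3)$.

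Finally, for the homology statement, I would note that $H_1(X,\mathbb Z)$ is by definition the abelianization $\pi_1(X)^{ab}$, so from $\pi_1(X)\cong\mathbb G$ we immediately get $H_1(X,\mathbb Z)\cong\mathbb G^{ab}$; this requires no further argument beyond functoriality of abelianization. The main obstacle, such as it is, is not a mathematical difficulty but a bookkeeping one: one must be careful that the fiber products are formed with the correct maps (the $\gamma_i$ on the orbifold side and the $p_i$ on the group side) and that the diagonal action of $G$ is what forces the single common value in the definition of $\mathbb G$ rather than three independent conditions. Once the correspondence with \cite[Proposition 3.3]{DP10} is set up correctly, the proof is essentially a citation.
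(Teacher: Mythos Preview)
Your proposal is correct and follows the same approach as the paper, which simply states the proposition as a direct consequence of \cite[Proposition 3.3]{DP10} without further proof. If anything, you have supplied more detail than the paper itself by spelling out how the fiber-product description matches the hypotheses of that reference and by noting that the homology claim follows from $H_1(X,\mathbb Z)=\pi_1(X)^{ab}$.
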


\section{Combinatorics, Bounds and the Algorithm}\label{bounds_smooth}

In the first part of this section we derive constraints on the numerical datum of a threefold isogenous to a product. 
These constraints imply that there are only finitely many numerical data of threefolds isogenous to 
a product of unmixed type with a fixed value of  
$\chi(\mathcal{O}_X)$ and consequently only finitely many algebraic data for such threefolds. 
In the second part we give an algorithm searching systematically through all possibilities  
and thereby classifying all threefolds
isogenous to a product of unmixed type with a fixed value of $\chi(\mathcal O_X)$. Running a 
MAGMA implementation of our algorithm in the absolutely faithful case for $\chi(\mathcal O_X)=-1$ we obtain the 
main theorem of this article.

\begin{definition}
Let  $g'\geq 0$ and  $m_1, \ldots , m_r \geq 2$
be integers.  We associate to the tuple  $T:=[g';m_1, \ldots ,m_r]$ the rational number
\[
\Theta(T):=2g'-2+\sum_{i=1}^r \frac{m_i-1}{m_i}\,.
\]
\end{definition}

\begin{remark}[{cf.\, \cite[Lemma III.3.8]{mir}}]\label{estimTheta} 

If $\Theta(T)>0$, then $\Theta(T)\geq f(g')$, where:
\[
f(g'):=\begin{cases}
  1/42,  & \text{if} \quad g'=0\\
  1/2, & \text{if} \quad g'=1\\
	2g'-2, & \text{if}  \quad g' \geq 2\\
\end{cases}
\]
Moreover, $\Theta(T)=f(g')$ if and only if $T \in \big\lbrace [0;2,3,7], [1;2], [2g'-2;-] \big\rbrace$.

\end{remark}

\begin{remark} \label{bre}
According to Hurwitz' Theorem, the order of the automorphism group of a compact Riemann surface $C$ of genus $g(C)\geq 2$ 
is bounded:
\[
|\Aut(C)| \leq 84\big(g(C)-1\big). 
\]
It is known that this bound is not sharp in general.
However, if  
$2 \leq g(C) \leq 301$, sharp bounds for $|\Aut(C)|$ can be found in \cite[p.~87]{Conder}.
\end{remark}

\begin{proposition}\label{boundgroupord}
Let $X:=(C_1\times C_2\times C_3)/G$ be a threefold isogenous to a product with numerical datum 
$(|G|, |K_1|, |K_2|, |K_3|, T_1, T_2, T_3)$.
Then 
\[
|G| \leq \bigg\lfloor \sqrt{- 8 \cdot \chi(\mathcal O_X) \prod_{i=1}^3\frac{|K_i|}{f(g_i')}} \bigg\rfloor.
\]
 \end{proposition}

\begin{proof}
Since $g(C_i) \geq 2$ Hurwitz' formula 
\[\displaystyle{g(C_i)-1 = \frac{1}{2}\frac{|G|}{|K_i|} \Theta(T_i)}\]
implies $\Theta(T_i)>0$. In combination with Proposition \ref{invarsmooth} and Remark \ref{estimTheta} we can estimate 
\[
-\chi(\mathcal O_X)= \frac{1}{|G|} \prod_{i=1}^3 \big(g(C_i)-1\big) =\frac{1}{8|G|}\prod_{i=1}^3 \frac{|G|}{|K_i|} \Theta(T_i) 
\geq \frac{|G|^2}{8} \prod_{i=1}^3 \frac{f(g_i')}{|K_i|}\,.
\]
\end{proof}

\begin{corollary}
Let $X$ be a threefold isogenous to a product. If the
action of $G$ is absolutely faithful, then
\[
|G| \leq \lfloor 168 \sqrt{-21 \chi(\mathcal O_X)} \rfloor.
\]
\end{corollary}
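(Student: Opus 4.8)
The plan is to specialize the bound from Proposition \ref{bounds2} to the absolutely faithful case, where by Definition \ref{def_min_real} we have $K_i = \{1_G\}$ for all $i$, and then pick the worst admissible value of $f(g_i')$. First I would recall that from Proposition \ref{bounds2},
\[
|G| \leq \left\lfloor \sqrt{-8\cdot \chi(\mathcal O_X)\cdot \prod_{i=1}^3 \frac{|K_i|}{f(g_i')}}\right\rfloor,
\]
and substitute $|K_i| = 1$, so that the radicand becomes $-8\chi(\mathcal O_X)\cdot \prod_{i=1}^3 \frac{1}{f(g_i')}$.

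Next I would minimize $f(g_i')$ over all $g_i' \geq 0$ subject to $\Theta(T_i) > 0$. From the Remark preceding Proposition \ref{bounds2}, the possible values of $f(g')$ are $\tfrac{1}{42}$ for $g'=0$, $\tfrac12$ for $g'=1$, and $2g'-2 \geq 2$ for $g' \geq 2$; hence the smallest possible value is $f(g_i') = \tfrac{1}{42}$, attained when $g_i' = 0$. Replacing each $f(g_i')$ by $\tfrac{1}{42}$ can only increase the radicand, giving
\[
|G| \leq \left\lfloor \sqrt{-8\chi(\mathcal O_X)\cdot 42^3}\right\rfloor.
\]

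Finally I would simplify the constant: $8\cdot 42^3 = 8\cdot 74088 = 592704$, and since $592704 = 168^2 \cdot 21$ (indeed $168^2 = 28224$ and $28224\cdot 21 = 592704$), we obtain
\[
|G| \leq \left\lfloor \sqrt{168^2\cdot 21\cdot (-\chi(\mathcal O_X))}\right\rfloor = \left\lfloor 168\sqrt{-21\chi(\mathcal O_X)}\right\rfloor,
\]
which is the claimed inequality. Note that $-\chi(\mathcal O_X) \geq 1$ by Remark \ref{miy}, so the expression under the root is a positive real number and the floor is well-defined. I do not expect any real obstacle here: the only point requiring slight care is the arithmetic identity $8\cdot 42^3 = (168)^2\cdot 21$, and the observation that the minimum of $f$ over admissible types is $1/42$ rather than some smaller value — both are immediate from the cited results.
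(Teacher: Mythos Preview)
Your proof is correct and is exactly the argument the paper intends: the corollary is stated without proof immediately after Proposition~\ref{bounds2}, and it follows by specializing that bound with $|K_i|=1$ and $f(g_i')\geq \tfrac{1}{42}$, together with the arithmetic identity $8\cdot 42^3 = 168^2\cdot 21$.
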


In the general case, we also have  that $|G|$ is bounded, unfortunately this bound is very large: 

\begin{proposition}
Let $X$ be a threefold isogenous to a product, then
\[
|G| \leq 84^6 \cdot \chi(\mathcal O_X)^2. 
\]
\end{proposition}

\begin{proof}
We write down the formula for $\chi(\mathcal O_X)$ in the following way
$$
-\chi(\mathcal O_X) =\frac{(g(C_1)-1)}{|K_3|}  \frac{(g(C_2)-1)}{|K_3|} \frac{(g(C_3)-1)|K_3|}{|G|} |K_3|
\geq \left(\frac{1}{84}\right)^3  |K_3|\,.
$$
The inequality is Hurwitz' theorem. 
It holds because $K_3$ acts faithfully on $C_1$ and $C_2$, by the minimality assumption, 
and $G/K_3$ acts faithfully  on $C_3$. 
The same bound holds for $|K_1|$ and $|K_2|$. The claim follows combining these inequalities with 
Proposition \ref{boundgroupord}.
\end{proof}

\begin{remark}
It would be interesting to understand if there exists a significantly 
better bound for $|G|$ in terms of  $\chi(\mathcal{O}_X)$. 
\end{remark}

 \begin{proposition}\label{bounds}
Let $X:=(C_1\times C_2\times C_3)/G$ be a  threefold isogenous to a product with numerical datum 
$(|G|, |K_1|, |K_2|, |K_3|, T_1, T_2, T_3)$, where $T_i:=[g_i'; m_{i,1},\ldots, m_{i,r_i}]$.
Then
\begin{itemize}\setlength{\itemsep}{.3\baselineskip}
\item[i)] $|K_i| ~ \big\vert  ~ \big( g(C_{[i+1]}) - 1 \big) \big( g(C_{[i+2]}) - 1 \big)$,
\item[ii)] $m_{i,j} ~ \big\vert ~  \big( g(C_{[i+1]}) - 1 \big) \big( g(C_{[i+2]}) - 1 \big)$,
\item[iii)]  $\big( g(C_i)- 1 \big) ~ \big\vert ~  \chi(\mathcal O_X) \frac{|G|}{|K_i|}$,
\item[iv)] $r_i \leq \dfrac{4\big( g(C_i) - 1 \big) |K_i|}{|G|} - 4g_i' + 4$,
\item[v)] $ m_{i,j} \leq  4g(C_i) + 2$,
\item[vi)]
$\displaystyle{ g_i' \leq 1  - \frac{|K_i| ~ 
\chi(\mathcal O_X)}{\big( g(C_{[i+1]}) - 1 \big) \big( g(C_{[i+2]}) - 1 \big)} \leq 1  -  \chi(\mathcal O_X)}$. 
\item[vii)]
$|K_i| \cdot |K_j|  ~ \big\vert  ~ |G|$, \quad  for all $1 \leq i < j \leq 3$.

\end{itemize}
Here $[\,\cdot \,]$ denotes the residue $\mathrm{mod }\ 3$. 
\end{proposition}

\begin{proof} We assume that $i=1$.

i-ii)  Let $V_1:=(d_1,e_1, \ldots , d_{g_1'}, e_{g_1'}; h_1, \ldots , h_{r})$ be a 
generating vector of type $T_1$ associated to the covering 
\[
f_1\colon C_1\rightarrow C_1/ (G/K_1) 
\]  
Let $g_j \in G$ be a representative of  $h_j \in G/K_1$ (if $r=0$, then  we set $g_j:=1_G$).
By the minimality of the $G$-action, the subgroup $\langle g_j\rangle \cdot K_1 \leq G$
acts  faithfully on $C_2 \times C_3$. 
Furthermore, $\langle g_j\rangle \cdot K_1$ is contained in $\Sigma_{V_1}$,
therefore, it acts
freely on $C_2 \times C_3$. In other words
\[
S:=\frac{C_2 \times C_3}{\langle g_j\rangle \cdot K_1}
\]
is a surface isogenous to a product with holomorphic Euler-Poincar\'e characteristic  
\[
\chi(\mathcal O_S) =\frac{\big( g(C_2) - 1 \big) \big( g(C_3) - 1 \big)}{| \langle g_j \rangle \cdot K_1|}\,.
\]
We conclude that  
\[
|K_1|  ~ \big\vert ~ \big( g(C_2) - 1 \big) \big( g(C_3) - 1 \big) \quad \makebox{and} \quad 
m_{1,j}= \mathrm{ord}(h_j) ~ \big\vert ~ \big( g(C_2) - 1 \big) \big( g(C_3) - 1 \big).
\]

iii) The statement follows from part i) and Proposition \ref{invarsmooth}:
\[
-\chi(\mathcal O_X)\frac{|G|}{|K_1|} = 
\big( g(C_1)-1 \big) \frac{\big( g(C_2) - 1 \big) \big(g(C_3) - 1\big)}{|K_1|} \,.
\]

iv) This is a straightforward consequence of Hurwitz' formula, using the fact  $m_{1,j}\geq 2$.

v) For a cyclic group $H$ acting faithfully on a 
compact Riemann surface $C$ of genus $g(C)\geq 2$,  Wiman's bound (see \cite{wiman}) holds: 
\[
|H| \leq 4g(C) + 2\,.
\] 
In particular $m_{1,j}\leq  4g(C_1) + 2$.

vi) According to Proposition \ref{invarsmooth} and  Hurwitz' formula
\[
g_1' -1  \leq \frac{\Theta(T_1)}{2}=\frac{|K_1|}{|G|}\big( g(C_1)-1 \big) =
  \frac{-|K_1| \chi(\mathcal O_X)}{\big( g(C_2) - 1 \big) \big( g(C_3) - 1 \big)}\,. 
\]
The second inequality follows now from part i).

vii) From the minimality of the $G$-action it follows $K_i \times K_j =K_i \cdot K_j  \leq G $.
\end{proof}

\begin{corollary}
Let $k$ be a negative integer, then there exists only finitely many 
algebraic data of threefolds $X$ isogenous to a product 
of unmixed type with $\mathbb \chi(\mathcal O_X)=k$.
\end{corollary}

Using the above combinatorial constraints we can give an algorithm to classify threefolds isogenous to a product in the unmixed case.
An implemented MAGMA version of the algorithm can be downloaded from:
\[
\makebox{\url{http://www.staff.uni-bayreuth.de/~bt300503/}}.
\]

 \begin{Input} The  holomorphic {Euler-Poincar\'e-characteristic} $\chi $ and
 the group order $\texttt n$.
\end{Input}

\begin{step} Compute the possible orders of the kernels, i.e. the triples $(k_1,k_2,k_3) \in \mathbb N^3$
satisfying the conditions imposed by Proposition \ref{bounds}.

\end{step}

\begin{step} Determine the possible genera of the curves $C_i$ and $C'_i=C_i/G$: 
for every triple in the output of Step 1,
construct the set of $9$-tuples
\[
(k_1,k_2,k_3,g_1,g_2,g_3,g_1',g_2',g_3'),
\] 
which fulfil the conditions  given by
Remark \ref{bre},  Proposition \ref{boundgroupord} and  Proposition \ref{bounds}.

\end{step}

\begin{step}  For every $9$-tuple in the output of Step 2 construct the set of  $9$-tuples
\[
(k_1,k_2,k_3,g_1,g_2,g_3,T_1,T_2,T_3),
\] 
 where $T_i=[g'_i; m_{i,1}, \ldots,m_{i,r_i}]$ are the types which satisfy the conditions of  Proposition \ref{bounds}.

\end{step}

\begin{step} 
For every $9$-tuple in the output of the Step 3 search among the groups of order $\texttt{n}$
for groups $G$ containing normal subgroups $K_i$ of order $k_i$ which have pairwise trivial
intersection:
\[
K_i \cap K_j = \lbrace 1_G \rbrace\, , \quad 1 \leq i< j \leq 3\,.
\]

 For every triple of subgroups satisfying these conditions  search for triples $(V_1,V_2,V_3)$ of disjoint generating vectors, where $V_i$ is a generating
vector for $G/K_i$ of type  $T_i$.

\end{step}

\begin{step} 
For each $7$-tuple $(G,K_1,K_2,K_3, V_1,V_2,V_3)$ in the output of Step 4 
there exists a threefold $X$ isogenous
to a product with this algebraic datum (see Remark \ref{converse}) and $\chi(\mathcal O_X)=\chi$ 
and $|G|=\texttt{n}$.
Using the method described in Section \ref{group_descr}, 
compute the Hodge diamond and the fundamental group of $X$.

\end{step}

\begin{Output}  The occurrences of 
\[
[G,K_1,K_2,K_3,T_1,T_2,T_3, p_g, q_2, q_1, h^{1,1}, h^{1,2}, H_1(-,\mathbb Z)]\,.
\]
for all threefolds isogenous to a product $X$
with  $\chi(\mathcal O_X)=\chi$ and $|G|=\texttt{n}$.
Note that we store $H_1$ instead of $\pi_1$ since the latter is always infinite and a presentation of this group is in general 
very long.
\end{Output}

\begin{comprem}
In Step 4, we search for generating vectors. We point out that different generating vectors may determine threefolds with the same invariants. For example,
this happens if (but not only if) they differ by some \textit{Hurwitz moves}.
These moves are described in \cite{CLP12}, \cite{Zimmermann} and \cite{penegini} and we refer to them for
further details.
\end{comprem}

\begin{comprem}\label{except}
The algorithm works for arbitrary values of $\chi$ and $\texttt{n}$, but the implemented MAGMA version has some 
technical problems.
If the output of Step 3 is not empty, then
 in Step 4 the program has to run through all groups of order $\texttt{n}$. 
Here we have to use the database of Small Groups, which contains:
\begin{itemize}
\item all groups of order up to 2000, excluding the groups of order 1024;
\item the groups whose order is a product of at most 3 primes;
\item the groups of order dividing $p^6$ for $p$  prime; 
\item  the groups of order $p^n q$, where $p^n$ is a prime-power dividing $2^8$, $3^6$, $5^5$ or $7^4$ and $q$ is a prime different from $p$.
 \end{itemize}
In the other cases Step 4 cannot be performed and this exceptional cases have to be treated separately. \\
We  remark that running through the groups of order 
 $256, 384, 512, 768, 1152, 1280, 1536$, and $ 1920$
is	  time-consuming,  because the number of these groups is high; e.g.,
there are $56092$  groups of order $256$.
\end{comprem}
 
Without additional conditions on the kernels, the bound for the group order 
\[
|G| \leq 84^6  \chi(\mathcal O_X)^2
\]
is too large to perform a complete classification, even for small values of $\chi(\mathcal O_X)$. \\
On the other hand if the group action is assumed to be absolutely faithful, then the bound becomes  
\[
|G| \leq \lfloor 168 \sqrt{-21 \chi(\mathcal O_X)} \rfloor
\]
and a full classification, at least for $\chi(\mathcal O_X)$ small, 
is feasible.
 
\begin{proof}[Proof of Theorem \ref{thm51}]
We run our MAGMA implementation in the absolutely faithful case for $\chi=-1$ 
and for all  $\texttt{n} \leq \lfloor 168 \sqrt{21} \rfloor =769$. 
The theorem follows since for each exceptional group order  the output of Step 3 is  empty. 
\end{proof}

\bibliographystyle{alpha}

\begin{thebibliography}{CCML98}

\bibitem[BCG08]{BCG08}
I.~Bauer, F.~Catanese, and F.~Grunewald.
\newblock The classification of surfaces with $p_g=q=0$ isogenous to a product
  of curves.
\newblock {\em Pure Appl. Math. Q.}, 4(2):547--586, 2008.

\bibitem[BCP97]{magma}
W.~Bosma, J.~Cannon, and C.~Playoust.
\newblock The {M}agma algebra system. {I}. {T}he user language.
\newblock {\em J. Symbolic Comput.}, 24(3-4):235--265, 1997.
\newblock Computational algebra and number theory (London, 1993).

\bibitem[Bea82]{Be82}
A.~Beauville.
\newblock L' in\'egalit\'e $p_g\geq 2q-4$ pour les surfaces de type
  g\'en\'eral.
\newblock {\em Bull. Soc. Math. France}, 110(3):343--346, 1982.
\newblock Appendix to \cite{deb82}.

\bibitem[BR07]{BR07}
M.~Beck and S.~Robins.
\newblock {\em Computing the continuous discretely}.
\newblock Undergraduate Texts in Mathematics. Springer, New York, 2007.
\newblock Integer-point enumeration in polyhedra.

\bibitem[Cat00]{Cat00}
F.~Catanese.
\newblock Fibred surfaces, varieties isogenous to a product and related moduli
  spaces.
\newblock {\em American Journal of Mathematics}, 122(1):1--44, 2000.

\bibitem[Cat08]{Cat08}
F.~Catanese.
\newblock Differentiable and deformation type of algebraic surfaces, real and
  symplectic structures.
\newblock In {\em Symplectic 4-manifolds and algebraic surfaces}, volume 1938
  of {\em Lecture Notes in Math.}, pages 55--167. Springer, Berlin, 2008.

\bibitem[CCML98]{CCML98}
F.~Catanese, C.~Ciliberto, and M.~Mendes~Lopes.
\newblock On the classification of irregular surfaces of general type with
  nonbirational bicanonical map.
\newblock {\em Trans. Amer. Math. Soc.}, 350(1):275--308, 1998.

\bibitem[CLP12]{CLP12}
F.~{Catanese}, M.~{Loenne}, and F.~{Perroni}.
\newblock {The irreducible components of the moduli space of dihedral covers of
  algebraic curves}.
\newblock ArXiv: 1206.5498, 2012.

\bibitem[Con14]{Conder}
M.~D.~E. Conder.
\newblock Large group actions on surfaces.
\newblock In {\em Riemann and {K}lein surfaces, automorphisms, symmetries and
  moduli spaces}, volume 629 of {\em Contemp. Math.}, pages 77--97. Amer. Math.
  Soc., Providence, RI, 2014.

\bibitem[CP09]{CP09}
G.~Carnovale and F.~Polizzi.
\newblock The classification of surfaces with $p_g = q = 1$ isogenous to a
  product of curves.
\newblock {\em Advances in Geometry}, 9(2):233--256, 2009.

\bibitem[CW34]{ChevWeil}
C.~Chevalley and A.~Weil.
\newblock {\"Uber das Verhalten der Integrale 1. Gattung bei Automorphismen des
  Funktionenk\"orpers}.
\newblock {\em Abhandlungen aus dem Mathematischen Seminar der Universit\"at
  Hamburg}, 10:358--361, 1934.

\bibitem[Deb82]{deb82}
O.~Debarre.
\newblock In\'egalit\'es num\'eriques pour les surfaces de type g\'en\'eral.
\newblock {\em Bull. Soc. Math. France}, 110(3):319--342, 1982.
\newblock With an appendix by A. Beauville.

\bibitem[DP12]{DP10}
T.~Dedieu and F.~Perroni.
\newblock The fundamental group of a quotient of a product of curves.
\newblock {\em J. Group Theory}, 15(3):439--453, 2012.

\bibitem[FK80]{FK80}
H.~M. Farkas and I.~Kra.
\newblock {\em Riemann surfaces}, volume~71 of {\em Graduate Texts in
  Mathematics}.
\newblock Springer-Verlag, New York-Berlin, 1980.

\bibitem[GH94]{G-H}
P.~Griffiths and J.~Harris.
\newblock {\em Principles of algebraic geometry}.
\newblock Wiley Classics Library. John Wiley \& Sons, Inc., New York, 1994.
\newblock Reprint of the 1978 original.

\bibitem[Hat02]{Hatcher}
A.~Hatcher.
\newblock {\em Algebraic Topology}.
\newblock Cambridge university press, 2002.

\bibitem[HP02]{HP02}
C.~D. Hacon and R.~Pardini.
\newblock Surfaces with {$p_g=q=3$}.
\newblock {\em Trans. Amer. Math. Soc.}, 354(7):2631--2638 (electronic), 2002.

\bibitem[Isa76]{Isaacs76}
I.~M. Isaacs.
\newblock {\em Character theory of finite groups}.
\newblock Academic Press [Harcourt Brace Jovanovich, Publishers], New
  York-London, 1976.
\newblock Pure and Applied Mathematics, No. 69.

\bibitem[Lam05]{lamotke}
K.~Lamotke.
\newblock {\em {R}iemannsche {F}l{\"a}chen}.
\newblock Springer-Lehrbuch. Springer {V}erlag, 2005.

\bibitem[Mir95]{mir}
R.~Miranda.
\newblock {\em Algebraic curves and {R}iemann surfaces}, volume~5 of {\em
  Graduate Studies in Mathematics}.
\newblock American Mathematical Society, Providence, RI, 1995.

\bibitem[Miy87]{MiyaokaChern}
Y.~Miyaoka.
\newblock {The Chern classes and Kodaira dimension of a minimal variety}.
\newblock {\em Advanced Studies in Math.}, 10:449--477, 1987.

\bibitem[Pen11]{penegini}
M.~Penegini.
\newblock The classification of isotrivially fibred surfaces with {$p_g=q=2$}.
\newblock {\em Collect. Math.}, 62(3):239--274, 2011.
\newblock With an appendix by S{\"o}nke Rollenske.

\bibitem[Pir02]{Pir02}
{G. P.} Pirola.
\newblock Surfaces with {$p_g=q=3$}.
\newblock {\em Manuscripta Math.}, 108(2):163--170, 2002.

\bibitem[Voi07]{Voisin07}
Claire Voisin.
\newblock {\em Hodge theory and complex algebraic geometry. {I}}, volume~76 of
  {\em Cambridge Studies in Advanced Mathematics}.
\newblock Cambridge University Press, Cambridge, english edition, 2007.
\newblock Translated from the French by Leila Schneps.

\bibitem[Wim95]{wiman}
A.~Wiman.
\newblock {\"U}ber die hyperelliptischen {K}urven und diejenigen vom
  {G}eschlechte $p = 3$, welche eindeutige {T}ransformationen in sich zulassen.
\newblock {\em {B}ihang {K}ongl. {S}venska {V}etenkamps-{A}kademiens
  {H}andlingar}, 21:1--23, 1895.

\bibitem[Zim87]{Zimmermann}
B.~Zimmermann.
\newblock Surfaces and the second homology of a group.
\newblock {\em Monatsh. Math.}, 104(3):247--253, 1987.

\end{thebibliography}

\end{document}